\documentclass[11pt]{amsart}
\usepackage{amsmath,amssymb,latexsym,esint,cite,mathrsfs}
\usepackage{verbatim,wasysym}
\usepackage[left=2.75cm,right=2.75cm,top=2.8cm,bottom=2.8cm]{geometry}

\usepackage{microtype}
\usepackage{color,enumitem,graphicx}
\usepackage[colorlinks=true,urlcolor=blue, citecolor=red,linkcolor=blue,
linktocpage,pdfpagelabels, bookmarksnumbered,bookmarksopen]{hyperref}
\usepackage[hyperpageref]{backref}
\usepackage[english]{babel}

\newcommand{\im}{{\rm i}}
\newcommand{\eps}{{\varepsilon}}
\newtheorem{theorem}{Theorem}[section]

\newtheorem{proposition}{Proposition}[section]
\newtheorem{lemma}{Lemma}[section]

\numberwithin{equation}{section}

\title[Fractional NLS with magnetic field and critical growth]{Fractional NLS 
	equations with magnetic field, \\ critical frequency and critical growth}

\author[Zhang Binlin]{Zhang Binlin}
\author[Marco\ Squassina]{Marco\ Squassina}
\author[Zhang Xia]{Zhang Xia}

\address[Zhang Binlin]{Department of Mathematics 
		\newline\indent
	Heilongjiang Institute of Technology
		\newline\indent
	Harbin 150050, P.R. China}
\email{zhangbinlin2012@163.com}

\address[Marco Squassina]{Dipartimento di Matematica e Fisica \newline\indent
	Universit\`a Cattolica del Sacro Cuore \newline\indent
	Via dei Musei 41, I-25121 Brescia, Italy}
\email{marco.squassina@dmf.unicatt.it}

\address[Zhang Xia]{Department of Mathematics
	\newline\indent
	Harbin Institute of Technology
	\newline\indent
	Harbin 150001, P.R. China}
\email{piecesummer1984@163.com}

\subjclass[2010]{Primary 35R11, 35J62, 35B33, Secondary 35A15}
\keywords{Fractional NLS,  magnetic operator, critical Sobolev exponent, critical frequency}
\begin{document}

\begin{abstract}
The paper is devoted to the study of a singularly perturbed fractional 
Schr\"{o}dinger equations involving critical frequency and critical growth in the presence of a magnetic field.
By using variational methods, we obtain the existence of  mountain pass solutions
$u_{\varepsilon}$ which tend to the trivial solution as $\varepsilon\rightarrow0$. 
Moreover, we get infinitely many solutions and sign-changing solutions for the problem in absence of
magnetic effects under some extra assumptions. 
\end{abstract}

\maketitle

\section{Introduction and main result}

In this paper,  we study  the  following Schr\"{o}dinger equations involving a critical nonlinearity
\begin{equation}
	\label{p}
	\varepsilon^{2\alpha}(-\Delta)^{\alpha}_{A_{\varepsilon}}u+V(x)u=f(x,|u|)u+K(x)|u|^{2_{\alpha}^{*}-2}u\quad\quad
	\mbox{in}\ \mathbb{R}^{N},
\end{equation}
driven by the magnetic fractional Laplacian operator $(-\Delta)^{\alpha}_{A_{\varepsilon}}$ of order $\alpha\in(0,1)$,
where $N\geq2$, $\varepsilon$ is a positive parameter, $2_{\alpha}^{*}=2N/(N-2\alpha)$ is the critical Sobolev exponent,
$V:\mathbb{R}^{N}\to\mathbb{R}$ and $A:\mathbb{R}^{N}\rightarrow\mathbb{R}^{N}$ are the electric and magnetic potentials 
respectively and $A_{\varepsilon}(x):=\varepsilon^{-1}A(x).$
If $A$ is a smooth function, the nonlocal operator
$(-\Delta)^{\alpha}_A$, which up to normalization constants can be defined on smooth functions $u$ as
$$
(-\Delta)^{\alpha}_Au(x):= 2\lim_{\varepsilon\rightarrow0}\int_{B_\varepsilon^c(x)}\frac{u(x)-e^{\im(x-y)\cdot A(\frac{x+y}{2})}u(y)}{|x-y|^{N+2\alpha}}\,dy,\ \ \ x\in\mathbb{R}^{N},
$$
has been recently introduced in \cite{DS}. The motivations for
its introduction are described in \cite{DS, SV} in more detail and rely essentially on the L\'{e}vy-Khintchine formula for
the generator of a general L\'{e}vy process. If the magnetic field $A \not \equiv 0$, it seems that the first work which considered
the existence of solutions for problem \eqref{p} in the subcritical case with $\varepsilon=1$, formally $\alpha=1$ and $K=0$ was \cite{EL}.
For more details on fractional magnetic operators we refer to \cite{TI1, TI2, TI3} for related physical background.
If the magnetic field $A\equiv0$, the above operator is consistent with the usual notion of fractional Laplacian,
which may be viewed as the infinitesimal generators of a
L\'{e}vy stable diffusion processes (see \cite{Apple2004}). This operator arises in the description of various phenomena in the applied sciences,  such as phase transitions,  materials science, conservation laws, minimal surfaces, water waves, optimization, plasma physics.
See \cite{Apple2004} and the references therein for a more detailed introduction. 
Some interesting models involving the fractional Laplacian 
have received much attention recently, such as the fractional Schr\"{o}dinger equation (see \cite{AP,Chang2013, Felmer, Laskin2000,L2}), the fractional Kirchhoff equation (see \cite{FV,PXZ}) and the fractional porous medium equation (see \cite{ JLV}).
Another driving force for the study of problem \eqref{p} arises in the study of the following time-dependent local Schr\"{o}dinger equation
\begin{equation}\label{101}
	\im\hbar \frac{\partial\psi}{\partial t}=\frac{1}{2m}\Big(\frac{\hbar}{\im} \nabla - A(x)\Big)^2\psi+W(x)\psi-g(x, |\psi|)\psi,
\end{equation}
where $\hbar$ is the Planck constant, $m$ is the mass of the particle, $A:\mathbb{R}^{N}\rightarrow\mathbb{R}^{N}$ is the magnetic potential, $W:\mathbb{R}^{N}\rightarrow\mathbb{R}^{N}$ is the electric potential,
$g$ is the nonlinear coupling and $\psi$ is the wave function representing the state of the particle.
This equation arises in Quantum Mechanics and describes the dynamics of the particle in a non-relativistic
setting, see for example \cite{RS}. Clearly, the form $\psi(x,t)=e^{-i\omega t \hbar^{-1}}u(x)$
is a standing wave solution of \eqref{101} if and only if $u$ satisfies the following stationary equation
\begin{align}\label{102}
	\Big(\frac{\varepsilon}{\im}\nabla -A(x)\Big)^{2}u+V(x)u=f(x, |u|)u.
\end{align}
where $\varepsilon=\hbar$, $V(x)=2m(W(x)-\omega)$, $f=2mg$ and
\begin{equation}
	\label{MagnOper}
	\Big(\frac{\varepsilon}{\im}\nabla-A(x)\Big)^2u=-\varepsilon^2\Delta u
	-\frac{2\varepsilon}{\im}A(x)\cdot\nabla u+|A(x)|^2u-\frac{\varepsilon}{\im}{\rm div} A(x)u.
\end{equation}
See \cite{DW} and the references cited therein for recent results in this direction (see also \cite{manmat}).
Similarly, we could derive the fractional version of (\ref{102}) as $A=0$ and $\varepsilon=1$, which is
a fundamental equation of fractional Quantum Mechanics in the study of particles on stochastic fields modeled by L\'{e}vy processes, see \cite{L2}. 
Also we refer the reader to \cite{Laskin2000} for extended physical description.

Recently, the study on fractional Schr\"{o}dinger equation has attracted much attention. On the one hand, some recent works involving the subcritical case have been obtained.\ {\em Felmer} {\em et al.} in \cite{Felmer} studied the following equations with $A=0$ and $V=1$
\begin{equation}\label{21}
	(-\Delta)^{\alpha}u+V(x)u=f(x,u).
\end{equation}
Using critical point theory, they obtained the existence of a ground state. Regularity, decay and symmetry properties of these solutions were also analyzed. In \cite{cheng}, {\em Cheng} investigated the existence of ground state for (\ref{21}) when $f(x,t)=|t|^{p-2}t$,
in which the coercivity assumption $V(x)\to+\infty$ for $|x|\to\infty$ is imposed.
In \cite{Secchi}, by using Mountain Pass arguments and a comparison method, 
{\em Secchi} considered the existence of ground state for (\ref{21})  when the potential $V$ satisfies the assumption
$\liminf_{|x|\rightarrow\infty}V(x)\geq V_\infty.$
In \cite{Ledesma}, assuming that $V^{-1}(0)$ has  nonempty interior, {\em Ledesma} obtained 
the existence of nontrivial solutions and explored the  concentration phenomenon of solutions for \eqref{21}.\
In \cite{Chen} {\em Chen} and {\em Zheng} studied the problem
\begin{eqnarray}\label{22}
	\varepsilon^{2\alpha}(-\Delta)^{\alpha}u+V(x)u=f(x,u)\quad\quad
	\mbox{in}\ \mathbb{R}^{N},
\end{eqnarray}
where $N\leq 3$, $f(x,t)=|t|^{p-2}t$ and $V(x)$ satisfies some smoothness and boundedness assumptions. By using  the Lyapunov-Schmidt reduction method, they showed that (\ref{22}) has a nontrivial solution $u_\varepsilon$ concentrating to some single point as $\varepsilon\rightarrow0$.
In \cite{Davila}, assuming that  $f(x,t)=|t|^{p-2}t$ and $V$ is a sufficiently smooth potential with $\inf_{\mathbb{R}^N}V>0$, {\em D\'{a}vila}
{\em et al.} recovered various existence results already known for the case $\alpha=1$ and showed the existence of solutions 
around $k$ nondegenerate critical points of $V$ for (\ref{22}).
In \cite{Shang},  {\em Shang} and {\em Zhang} studied the concentration phenomenon of solutions for (\ref{22}) under the assumptions $f(x,t)=K(x)|t|^{p-2}t$, $V$, $K$ are positive smooth functions and $\inf_{\mathbb{R}^N}V>0$. By a perturbative methods, they showed existence of solutions which concentrate near some critical points of the function
$$
\Gamma(x)=(V(x))^{\frac{p+2}{p}-\frac{N}{2\alpha}}(K(x))^{-\frac{2}{p+1}}.
$$
On the other hand, there are some recent papers dedicated to the study of fractional Schr\"{o}dinger equations with critical growth
under various hypotheses on the
potential function $V(x)$.
In \cite{SZ}, {\em Shang} and {\em Zhang} studied the existence for the critical fractional Schr\"{o}dinger equation
\begin{equation}\label{L1}
	\varepsilon^{2\alpha}(-\Delta)^{\alpha}u+V(x)u=\lambda f(u)+|u|^{2_{\alpha}^{*}-2}u\quad\quad
	\mbox{in}\ \mathbb{R}^{N},
\end{equation}
where  $0<\inf_{\mathbb{R}^N}V<\liminf_{|x|\rightarrow\infty}V(x)<+\infty$. Based on variational methods, they showed that problem \eqref{L1} has a nonnegative ground state solution for all sufficiently large $\lambda$ and small $\varepsilon$.
Moreover, {\em Shen} and {\em Gao} in \cite{SG} obtained the existence of nontrivial solutions for problem \eqref{L1}
under  assumptions that potential function $V$ is nonnegative and trapping, namely $\liminf_{|x|\rightarrow\infty}V(x)=+\infty$.
As for the case $\varepsilon=1$, we refer to \cite{ZZR, ZZX} for some recent results.

Motivated by the above works, especially by \cite{DW, DL},   we are interested in critical fractional Schr\"{o}dinger equations
with the magnetic field and the critical frequency case in the sense that
$\min_{\mathbb{R}^N}V=0.$ It is worth mentioning that the study of  fractional Schr\"{o}dinger equations
with the critical frequency was first investigated by {\em Byeon} and {\em Wang} \cite{BW1, BW2}.
Main difficulties arise, when dealing with this problem, because of the appearance of the magnetic field and the critical frequency,
and of the nonlocal nature of the fractional Laplacian. For this, we need to develop new techniques to overcome
difficulties induced by these new features.

\noindent
We shall assume the following conditions:

{\em\begin{itemize}
		\item[$(V_1)$] $V\in C(\mathbb{R}^N,\mathbb{R})$ and  $\min_{\mathbb{R}^N}V=0$;
		\item[$ (V_2)$] There exists  $a>0$ such that $V^{a}=\{x\in\mathbb{R}^N:V(x)<a\}$ has finite Lebesgue measure;
		\item[$(K)$] There exist  $K_0,K_1>0$ such that $K_0\leq K(x)\leq K_1$ for any $x\in\mathbb{R}^N$;
		\item[$ (f_1)$] $f\in C(\mathbb{R}^N\times\mathbb{R}^+,\mathbb{R})$ and there exists $c_0>0$ and $p\in(2,2_{\alpha}^{*})$ such that 
		$$
		|f(x,t)|\leq c_0(1+|t|^{p-2}),\quad\,\,\,\text{for any $(x,t)\in\mathbb{R}^N\times\mathbb{R}^+$;}
		$$ 
		\item[$(f_2)$] $\lim_{t\rightarrow0+}f(x,t)=0$ uniformly in $x\in\mathbb{R}^N$;
		\item[$(f_3)$] There exists $\mu>2$ such that $\mu F(x,t)\leq f(x,t)t^2$ for any $t>0$,  $F(x,t):=\int_0^t f(x,s)s\,ds$;
		\item[$(f_4)$] There exist $c_1>0$, $q\in(2,2_{\alpha}^{*})$ such that $f(x,t)\geq c_1t^{q-2}$ for any $t>0$.
	\end{itemize}}
	
	We say that
	$u\in X_\eps$ is a (weak) solution of problem \eqref{p} if for any $v\in X_\eps$,
	\begin{displaymath}
		\begin{split}
			\textup{Re}\int_{\mathbb{R}^{2N}}&\frac{\left(u(x)-e^{\im(x-y)\cdot A_{\varepsilon}(\frac{x+y}{2})}u(y)\right)
				\overline{\left(v(x)-e^{\im(x-y)\cdot A_{\varepsilon}(\frac{x+y}{2})}v(y)\right)}}{|x-y|^{N+2\alpha}}\,dxdy\\
			&+\varepsilon^{-2\alpha}\textup{Re}\int_{\mathbb{R}^{N}}V(x)u\overline{v}\,dx
			=\varepsilon^{-2\alpha}\textup{Re}\int_{\mathbb{R}^{N}}\left(f(x,|u|)u+K(x)|u|^{2_{\alpha}^{*}-2}u\right)\overline{v}\,dx.
		\end{split}
	\end{displaymath}
	\noindent where $\bar z$ denotes complex conjugate of $z\in \mathbb{C}$, Re$z$ is the real part of $z$,
	$(X_\eps,\|\cdot\|_{X_\eps})$ is a suitable subspace of the fractional space $H^{\alpha}_{A_{\varepsilon}}(\mathbb{R}^{N},\mathbb{C})$. 
	See Section 2 for more details.
\vskip4pt
\noindent
	We are now in a position to state the main result of the paper.
	
	\begin{theorem}\label{Th1}
		Assume that  $(V_1)$-$(V_2)$,
		$(f_1)$-$(f_4)$, $(K)$ hold and that $A\in C(\mathbb{R}^{N},\mathbb{R}^{N})$.\ Then there exists $\varepsilon_{0}>0$
		such that for any $\varepsilon\in(0,\varepsilon_{0})$, problem
		\textup{(1.1)} admits a nontrivial mountain pass solution  $u_{\varepsilon}\in X_\varepsilon$ such that
		$\|u_\eps\|_{X_\eps}\rightarrow0$ as $\varepsilon\rightarrow0$.
	\end{theorem}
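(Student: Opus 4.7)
The plan is to apply the mountain pass theorem to the natural energy functional
$$J_\varepsilon(u) = \frac{1}{2}\iint_{\mathbb{R}^{2N}}\frac{\bigl|u(x)-e^{\im(x-y)\cdot A_\varepsilon(\frac{x+y}{2})}u(y)\bigr|^2}{|x-y|^{N+2\alpha}}\,dx\,dy + \frac{\varepsilon^{-2\alpha}}{2}\int_{\mathbb{R}^N}V|u|^2 - \varepsilon^{-2\alpha}\!\int_{\mathbb{R}^N}\!\!F(x,|u|) - \frac{\varepsilon^{-2\alpha}}{2_\alpha^*}\!\int_{\mathbb{R}^N}\!\!K|u|^{2_\alpha^*}$$
on $X_\varepsilon$, whose critical points are precisely the weak solutions of \eqref{p}. \textbf{Step 1 (mountain pass geometry).} From $(f_1)$, $(f_2)$ one gets $|F(x,t)|\le \eta t^2 + C_\eta |t|^{p}$ for every $\eta>0$, and together with $(K)$ and the fractional magnetic Sobolev embedding $X_\varepsilon\hookrightarrow L^{2_\alpha^*}$, standard estimates give $J_\varepsilon(u)\ge\rho$ for $\|u\|_{X_\varepsilon}=r$ small. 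The Ambrosetti--Rabinowitz condition $(f_3)$ together with the critical term ensure $J_\varepsilon(tu_0)\to-\infty$ along any ray, producing an end-point $e\in X_\varepsilon$ with $J_\varepsilon(e)<0$.

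\textbf{Step 2 (control of the min--max level).} Let $c_\varepsilon=\inf_{\gamma\in\Gamma_\varepsilon}\max_{t\in[0,1]}J_\varepsilon(\gamma(t))$. The critical frequency assumption $\min V=0$ is used here in a decisive way: pick $x_0\in V^{-1}(0)$, fix a smooth cut-off $\varphi$ supported near $x_0$, and combine $(f_4)$ with an $\varepsilon$-dependent scaling $\varphi_\varepsilon(\cdot)=\varphi(\varepsilon^{-s}(\cdot-x_0))$ (gauged by the magnetic phase $e^{\im A(x_0)\cdot x/\varepsilon}$ so as to annihilate the magnetic contribution to leading order). Tuning $s$ according to $q$ from $(f_4)$ yields $\max_{t\ge 0} J_\varepsilon(t\varphi_\varepsilon)=o(1)$, hence $c_\varepsilon\to 0$. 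In particular, for $\varepsilon$ small enough, $c_\varepsilon$ sits strictly below the fractional critical threshold $c^\ast=\frac{\alpha}{N}K_1^{-(N-2\alpha)/(2\alpha)}S_\alpha^{N/(2\alpha)}$ beyond which compactness is lost.

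\textbf{Step 3 (local Palais--Smale condition).} Any PS sequence $\{u_n\}$ is bounded by $(f_3)$ and $2_\alpha^*>2$. The finite-measure hypothesis $(V_2)$, combined with the diamagnetic inequality $\bigl|\,|u_n|(x)-|u_n|(y)\bigr|\le\bigl|u_n(x)-e^{\im(x-y)\cdot A_\varepsilon((x+y)/2)}u_n(y)\bigr|$, provides a compact embedding of $X_\varepsilon$ into $L^r(\mathbb{R}^N,\mathbb{C})$ for $r\in[2,2_\alpha^*)$, which takes care of the subcritical nonlinearity. For the critical term I would adapt the Brezis--Lieb and Lions concentration-compactness splitting to the magnetic fractional setting to show that any loss of compactness produces an additive Dirac mass costing at least $c^\ast$; the estimate from Step~2 rules this out, yielding $u_n\to u_\varepsilon$ strongly. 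The mountain pass theorem then produces a nontrivial critical point $u_\varepsilon$ with $J_\varepsilon(u_\varepsilon)=c_\varepsilon$.

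\textbf{Step 4 (vanishing norm).} Using $J_\varepsilon(u_\varepsilon)=c_\varepsilon$ and $\langle J'_\varepsilon(u_\varepsilon),u_\varepsilon\rangle=0$, the identity $c_\varepsilon=J_\varepsilon(u_\varepsilon)-\frac{1}{\mu}\langle J'_\varepsilon(u_\varepsilon),u_\varepsilon\rangle$ and $(f_3)$, $(K)$ give $\bigl(\tfrac12-\tfrac{1}{\mu}\bigr)\|u_\varepsilon\|_{X_\varepsilon}^2\le c_\varepsilon$, so $\|u_\varepsilon\|_{X_\varepsilon}\to 0$ as $\varepsilon\to 0$. \emph{The main obstacle} is Step~3: proving the local PS condition while simultaneously handling the nonlocal character of $(-\Delta)^\alpha_{A_\varepsilon}$, the magnetic phase (which prevents a direct use of Brezis--Lieb on $u_n$ itself), and the critical exponent. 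The phases must be carefully tracked in the splitting $u_n=u_\varepsilon+w_n$, and the diamagnetic inequality is the bridge that reduces the concentration analysis to the non-magnetic fractional case.
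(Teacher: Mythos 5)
Your overall architecture (mountain pass, local Palais--Smale below a threshold, test-function estimate for the min-max level, and an $(f_3)$-based a priori bound for the vanishing norm) matches the paper's. However, there are two genuine gaps that would make the proof fail as written.

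\textbf{The compactness threshold is $\varepsilon$-dependent, not fixed.} In Step~2 you claim compactness holds for $c<c^\ast=\frac{\alpha}{N}K_1^{-(N-2\alpha)/(2\alpha)}S_\alpha^{N/(2\alpha)}$ and that it suffices to show $c_\varepsilon=o(1)$. This is off by a factor that is essential here: because the functional $I_\varepsilon$ carries $\varepsilon^{-2\alpha}$ in front of the nonlinear terms, any nontrivial concentration bubble $w_n$ satisfies (roughly) $S_\alpha\|w_n\|_{L^{2^*_\alpha}}^2\lesssim \varepsilon^{-2\alpha}K_1\|w_n\|_{L^{2^*_\alpha}}^{2^*_\alpha}$ and contributes $c-I_\varepsilon(u)\gtrsim \varepsilon^{-2\alpha}\|w_n\|_{L^{2^*_\alpha}}^{2^*_\alpha}$; tracking the exponents gives a threshold of the form $C_0\,\varepsilon^{N-2\alpha}$ (your formula for $c^\ast$ times $\varepsilon^{N-2\alpha}$), which is exactly what the paper proves in Lemma~\ref{Th3.1}. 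Since $N>2\alpha$, this threshold also tends to $0$, so $c_\varepsilon=o(1)$ carries no information. One must show $c_\varepsilon<C_0\varepsilon^{N-2\alpha}$, and this is why the paper's test function has \emph{two} parameters: a $\delta$-dependent profile $\phi_\delta$ chosen so that both its Gagliardo seminorm and the magnetic-phase error are $O(\delta^{2\alpha})$, and then the natural $\varepsilon$-scaling, which produces the overall factor $\varepsilon^{N-2\alpha}$ and allows $\delta$ to be fixed small enough that the constant in front beats $C_0$. Your single tunable exponent $s$ does not reproduce this.

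\textbf{There is no globally compact embedding $X_\varepsilon\hookrightarrow L^r(\mathbb{R}^N)$.} Assumption $(V_2)$ asks only that a single sublevel set $V^a$ has finite measure; if $V\equiv a$ then $X_\varepsilon\cong H^\alpha(\mathbb{R}^N)$, which does not embed compactly into any $L^r$ because of translations, so the claim in Step~3 is false in general. The paper therefore does not rely on a compact embedding: it first establishes a vanishing property for a diagonal subsequence (Lemma~\ref{vanish}), then a stability result for the truncated weak limit $\widehat u_j=\varphi_j u$ (Lemma~\ref{Stab}), and finally shows that $u^1_{n_j}=u_{n_j}-\widehat u_j$ is itself a Palais--Smale sequence at level $c-I_\varepsilon(u)$ (Lemma~\ref{Shift}). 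Assumption $(V_2)$ is used only inside Lemma~\ref{Th3.1}, to control $\int_{V^a}(a-V)|u^1_{n_j}|^2$ by H\"older on $V^a\setminus B_R$ (finite measure) plus local strong convergence on $B_R$. Replacing this machinery by a nonexistent compact embedding leaves a real gap that a ``Brezis--Lieb plus concentration-compactness'' slogan does not fill.

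A smaller point: the magnetic Sobolev constant $S_\alpha^\varepsilon$ depends a priori on $\varepsilon$, so you also need the two-sided $\varepsilon$-uniform bounds of Proposition~\ref{Const-lim} before the threshold $C_0\varepsilon^{N-2\alpha}$ can be compared across $\varepsilon$.
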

	
	\noindent\textbf{Remark 1.1}
	
	$(i)$ unlike  solutions with concentration phenomena constructed in some earlier works without the magnetic field,
	our nontrivial solutions are closed to the trivial solution.
	
	$(ii)$ If $A=0$ and $\alpha\nearrow 1$, then Theorem \ref{Th1} reduces to a result of {\em Ding} and {\em Lin} in \cite{DL}.
	To our best knowledge, it seems that there is no result on the existence of solutions for singularly perturbed fractional Schr\"{o}dinger equations
	with an external magnetic field.
		
$(iii)$ In \cite{SV} it was proved that, in the singular limit for $\alpha\nearrow 1,$ the operator $(1-\alpha)\eps^{2\alpha}(-\Delta)^{\alpha}_{A_\eps}$
converges, in a suitable sense, to the classical local magnetic operator \eqref{MagnOper}.	Whence, up to multiplication by $1-\alpha$ the nonlocal
theory is somehow consistent with the classical one.	
		
	The paper is organized as follows.
	In Section~2, we recall some necessary definitions and properties of the functional spaces.
	In Section~3, we provide some preliminary results.
	In Section~4 we prove Theorem \ref{Th1}.
	In Section~5, we get some results for problem \textup{(1.1)} in the case $A$.

	\section{Functional Setting}\label{sec2}
	
	For the convenience of the reader, in this part we recall some definitions and basic properties of fractional magnetic Sobolev spaces $H^{\alpha}_{A_{\varepsilon}}(\mathbb{R}^{N},\mathbb{C})$.
	For a wider treatment on these spaces, we refer the reader to \cite{DS}. 
	Let $L^2(\mathbb{R}^{N},\mathbb{C})$ be the Lebesgue  space of complex-valued 
	functions with summable square, endowed with the real scalar  product
	$$
	\langle u,v\rangle_{L^2}:=\textup{Re}\int_{\mathbb{R}^{N}}u\overline{v}\,dx,
	$$
	for any $u,v\in L^2(\mathbb{R}^{N},\mathbb{C})$.
	For any $\alpha\in(0,1)$, the space $H^{\alpha}_{A_{\varepsilon}}(\mathbb{R}^{N},\mathbb{C})$ is defined by
	$$
	H^{\alpha}_{A_{\varepsilon}}(\mathbb{R}^{N},\mathbb{C})=\left\{u\in L^{2}(\mathbb{R}^{N},\mathbb{C}):[u]_{\alpha,{A_{\varepsilon}}}<\infty\right\},
	$$
	where $[u]_{\alpha,{A_{\varepsilon}}}$ denotes the so-called {\em magnetic Gagliardo semi-norm}, that is
	$$
	[u]_{\alpha,{A_{\varepsilon}}}
	:=\Big(\int_{\mathbb{R}^{2N}}\frac{|u(x)-e^{\im(x-y)\cdot {A_{\varepsilon}}(\frac{x+y}{2})}u(y)|^{2}}{|x-y|^{N+2\alpha}}\,dxdy
	\Big)^{\frac{1}{2}}
	$$
	and $H^{\alpha}_{A_{\varepsilon}}(\mathbb{R}^{N},\mathbb{C})$ is endowed with the norm
	$$
	\|u\|_{\alpha,A_{\varepsilon}}=\left([u]_{\alpha,A_{\varepsilon}}^2
	+\|u\|_{L^{2}}^2\right)^{1/2}.
	$$
	If $A=0$, then $H^{\alpha}_{A_{\varepsilon}}(\mathbb{R}^{N},\mathbb{C})$ reduces to the well-known fractional 
	space $H^{\alpha}(\mathbb{R}^{N})$. Also, $H^{\alpha}_{A_{\varepsilon}}(\mathbb{R}^{N},\mathbb{C})$ is a Hilbert space with the
	real scalar product
	$$
	\langle u,v\rangle_{\alpha,A_{\varepsilon}}:=\langle u,v\rangle_{L^2}+
	\textup{Re}\int_{\mathbb{R}^{2N}}\frac{\big(u(x)-e^{\im(x-y)\cdot A_{\varepsilon}(\frac{x+y}{2})}u(y)\big)
		\overline{\big(v(x)-e^{\im(x-y)\cdot A_{\varepsilon}(\frac{x+y}{2})}v(y)\big)}}{|x-y|^{N+2\alpha}}\,dxdy,
	$$
	for any $u,v\in H^{\alpha}_{A_{\varepsilon}}(\mathbb{R}^{N},\mathbb{C})$.
	The operator $(-\Delta)^{\alpha}_{A_{\varepsilon}}$: $H^{\alpha}_{A_{\varepsilon}}(\mathbb{R}^{N},\mathbb{C})\rightarrow H^{-\alpha}_{A_{\varepsilon}}(\mathbb{R}^{N},\mathbb{C})$ is defined by 
	\begin{equation*}
		\left\langle(-\Delta)^{\alpha}_{A_{\varepsilon}}u,v\right\rangle:=
		\textup{Re}\int_{\mathbb{R}^{2N}}\frac{\big(u(x)-e^{\im(x-y)\cdot A_{\varepsilon}(\frac{x+y}{2})}u(y)\big)
			\overline{\big(v(x)-e^{\im(x-y)\cdot A_{\varepsilon}(\frac{x+y}{2})}v(y)\big)}}{|x-y|^{N+2\alpha}}\,dxdy,
	\end{equation*}
	via duality. Furthermore, the space $D^{\alpha}_{A_{\varepsilon}}(\mathbb{R}^{N},\mathbb{C})$ is defined as 
	$$
	D^{\alpha}_{A_{\varepsilon}}(\mathbb{R}^{N},\mathbb{C}):=\big\{u\in L^{2^*_\alpha}(\mathbb{R}^{N},\mathbb{C}): [u]_{\alpha,A_{\varepsilon}}<\infty\big\}.
	$$
	and endowed with the norm $[\cdot]_{\alpha,A_{\varepsilon}}$. We recall (cf.\ \cite[Lemma 3.5]{DS}) the following embedding
	
	\begin{proposition}[Magnetic embeddings]
		\label{th2.1}  The embeddings 
		$$
		D^{\alpha}_{A_\epsilon}(\mathbb{R}^{N},\mathbb{C})\hookrightarrow L^{2_\alpha^*}(\mathbb{R}^N,\mathbb{C}), \,\,\,\quad 
		H^{\alpha}_{A_\epsilon}(\mathbb{R}^{N},\mathbb{C})\hookrightarrow L^{\nu}(\mathbb{R}^N,\mathbb{C}),
		$$ 
		is continuous for any $\nu\in [2,2_\alpha^*]$. Moreover, the embedding 
		$$
		H^{\alpha}_{A_\epsilon}(\mathbb{R}^{N},\mathbb{C})\hookrightarrow\hookrightarrow L_{{\rm loc}}^\nu(\mathbb{R}^{N},\mathbb{C})
		$$ is compact for any $\nu\in [1,2_\alpha^*)$.
	\end{proposition}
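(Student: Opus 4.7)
The plan is to reduce all three embeddings to the classical (non-magnetic) fractional Sobolev theory by means of the pointwise diamagnetic inequality. First I would establish that for almost every $x,y\in\mathbb{R}^{N}$,
$$
\bigl||u(x)|-|u(y)|\bigr|\leq \bigl|u(x)-e^{\im(x-y)\cdot A_{\varepsilon}(\frac{x+y}{2})}u(y)\bigr|,
$$
which follows at once from the elementary inequality $||a|-|b||\leq |a-zb|$ valid for $a,b\in\mathbb{C}$ and $|z|=1$. Dividing by $|x-y|^{N+2\alpha}$ and integrating on $\mathbb{R}^{2N}$ yields $[|u|]_{H^{\alpha}}\leq [u]_{\alpha,A_{\varepsilon}}$, so whenever $u\in H^{\alpha}_{A_{\varepsilon}}(\mathbb{R}^{N},\mathbb{C})$ (respectively $D^{\alpha}_{A_{\varepsilon}}$) we have $|u|\in H^{\alpha}(\mathbb{R}^{N})$ (resp.\ $D^{\alpha}(\mathbb{R}^{N})$).

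The two continuous embeddings then follow by applying the classical fractional Sobolev inequalities to $|u|$ and using the trivial identity $\||u|\|_{L^{\nu}(\mathbb{R}^{N})}=\|u\|_{L^{\nu}(\mathbb{R}^{N},\mathbb{C})}$. More precisely, the classical embedding $D^{\alpha}(\mathbb{R}^{N})\hookrightarrow L^{2_{\alpha}^{*}}(\mathbb{R}^{N})$ gives the first inclusion, while interpolation between $L^{2}$ and $L^{2_{\alpha}^{*}}$ produces the scale $\nu\in[2,2_{\alpha}^{*}]$ for $H^{\alpha}_{A_{\varepsilon}}$.

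For the local compactness, I would fix an arbitrary bounded Lipschitz domain $\Omega\subset\mathbb{R}^{N}$ and prove that every bounded sequence $\{u_{n}\}\subset H^{\alpha}_{A_{\varepsilon}}(\mathbb{R}^{N},\mathbb{C})$ is bounded in the standard complex space $H^{\alpha}(\Omega,\mathbb{C})$. Writing
$$
|u_{n}(x)-u_{n}(y)|\leq \bigl|u_{n}(x)-e^{\im(x-y)\cdot A_{\varepsilon}(\frac{x+y}{2})}u_{n}(y)\bigr|+\bigl|1-e^{\im(x-y)\cdot A_{\varepsilon}(\frac{x+y}{2})}\bigr|\,|u_{n}(y)|,
$$
and using the bound $|1-e^{\im t}|\leq |t|$ together with the boundedness of $A$ on compact sets (by continuity), the second term contributes
$$
\int_{\Omega}\int_{\Omega}\frac{|x-y|^{2}\,|u_{n}(y)|^{2}}{|x-y|^{N+2\alpha}}\,dx\,dy\leq C(\Omega,A,\varepsilon)\,\|u_{n}\|_{L^{2}}^{2},
$$
which is finite precisely because $\alpha<1$. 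Combined with the diamagnetic bound on the first term, this shows $\{u_{n}\}$ is bounded in $H^{\alpha}(\Omega,\mathbb{C})$. The classical Rellich--Kolmogorov theorem then yields a subsequence converging in $L^{\nu}(\Omega)$ for every $\nu\in[1,2_{\alpha}^{*})$, and a diagonal argument along an exhaustion $\Omega_{k}\nearrow\mathbb{R}^{N}$ promotes this to convergence in $L^{\nu}_{\rm loc}(\mathbb{R}^{N},\mathbb{C})$.

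The only delicate step is the compactness: the diamagnetic inequality controls $|u_{n}|$ but not $u_{n}$ itself, and the passage to a bound on the ordinary complex Gagliardo seminorm on bounded sets requires the phase estimate above. The constraint $\alpha<1$ is exactly what keeps the kernel $|x-y|^{2-N-2\alpha}$ locally integrable, so this is where the nonlocal exponent intervenes in an essential way.
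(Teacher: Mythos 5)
Your proof is correct. The paper itself does not prove Proposition \ref{th2.1}; it simply cites \cite[Lemma~3.5]{DS}, so there is no in-paper argument to compare against. The route you take is the standard one and coincides with what the cited reference does: the pointwise diamagnetic inequality (which the present paper also invokes, cf.\ the proof of Proposition~\ref{Const-lim} and \cite[Remark~3.2]{DS}) reduces both continuous embeddings to the classical scalar case, and the elementary decomposition
$$
u_n(x)-u_n(y)=\Big(u_n(x)-e^{\im(x-y)\cdot A_{\varepsilon}(\frac{x+y}{2})}u_n(y)\Big)+\Big(e^{\im(x-y)\cdot A_{\varepsilon}(\frac{x+y}{2})}-1\Big)u_n(y)
$$
together with $|e^{\im t}-1|\leq |t|$ and the local integrability of $|x-y|^{2-N-2\alpha}$ (precisely because $\alpha<1$) transfers boundedness in $H^\alpha_{A_\varepsilon}$ to boundedness in the ordinary $H^\alpha(\Omega,\mathbb{C})$ for each bounded $\Omega$, after which Rellich--Kondrachov applies. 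You correctly flag the genuine subtlety: the diamagnetic inequality alone only yields compactness for $\{|u_n|\}$, not for $\{u_n\}$, so the phase estimate is indispensable. Two small points that are implicit but worth stating: you use $(a+b)^2\leq 2a^2+2b^2$ when passing from the pointwise triangle inequality to the seminorm estimate, and you should take $\Omega$ to be a ball (or otherwise convex) so that $\frac{x+y}{2}\in\Omega$, ensuring $A_\varepsilon$ is bounded on the relevant set by continuity of $A$; both are routine and do not affect the validity of the argument.
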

	
	\noindent
	In this paper, we will use the following subspace of $D^{\alpha}_{A_{\varepsilon}}(\mathbb{R}^{N})$ defined by
	$$
	X_\eps:=\left\{u\in D^{\alpha}_{A_{\varepsilon}}(\mathbb{R}^{N},\mathbb{C}):\int_{\mathbb{R}^{N}}V(x)|u|^2\,dx<\infty \right\}
	$$
	with the norm
	$$
	\|u\|_{X_\eps}=\left([u]_{\alpha,A_{\varepsilon}}^2+\int_{\mathbb{R}^{N}}V(x)|u|^2\,
	dx\right)^{1/2},
	$$
	where $V$ is nonnegative.
	For any $\varepsilon>0$, the norm $\|\cdot\|_{X_\eps}$ is equivalent to the following norm
	$$
	\|u\|_{\varepsilon}:=\left([u]_{\alpha,A_{\varepsilon}}^2+\varepsilon^{-2\alpha}\int_{\mathbb{R}^{N}}V(x)|u|^2\,dx\right)^{1/2},	
	$$
	which will be used from time to time.
	
	\begin{proposition}[$X_\eps$ embedding]
		\label{th2.2}  If $(V_2)$ holds, the injection
		$
		X_\eps\hookrightarrow H^{\alpha}_{A_{\varepsilon}}(\mathbb{R}^{N},\mathbb{C})
		$ 
		is continuous.
	\end{proposition}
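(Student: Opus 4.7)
The goal is to control $\|u\|_{L^2}^2$ by $\|u\|_{X_\eps}^2 = [u]_{\alpha,A_\eps}^2 + \int_{\mathbb{R}^N} V(x)|u|^2\,dx$, since once this is done, the identity $\|u\|_{\alpha,A_\eps}^2 = [u]_{\alpha,A_\eps}^2 + \|u\|_{L^2}^2$ immediately yields the continuous embedding. The plan is to split the integration domain into $V^a = \{x : V(x) < a\}$ and its complement, and to use $(V_2)$ together with the Sobolev embedding from Proposition \ref{th2.1} to bound each piece.

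On the complement $(V^a)^c$ one has $V(x) \geq a$ pointwise, and therefore
\[
\int_{(V^a)^c} |u|^2\,dx \;\leq\; \frac{1}{a}\int_{(V^a)^c} V(x)|u|^2\,dx \;\leq\; \frac{1}{a}\int_{\mathbb{R}^N} V(x)|u|^2\,dx.
\]
On the set $V^a$, which has finite Lebesgue measure by hypothesis $(V_2)$, I would apply H\"older's inequality with conjugate exponents $\tfrac{N}{2\alpha}$ and $\tfrac{N}{N-2\alpha}$ to obtain
\[
\int_{V^a} |u|^2\,dx \;\leq\; |V^a|^{\frac{2\alpha}{N}}\,\|u\|_{L^{2^*_\alpha}}^{2}.
\]
Since $u\in X_\eps\subset D^{\alpha}_{A_\eps}(\mathbb{R}^N,\mathbb{C})$, the first part of Proposition \ref{th2.1} gives a constant $C=C(N,\alpha)>0$ with $\|u\|_{L^{2^*_\alpha}} \leq C\,[u]_{\alpha,A_\eps}$, so this piece is controlled by $C^2|V^a|^{2\alpha/N}[u]_{\alpha,A_\eps}^2$.

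Adding the two estimates yields
\[
\|u\|_{L^2}^2 \;\leq\; C^2|V^a|^{\frac{2\alpha}{N}}[u]_{\alpha,A_\eps}^2 + \frac{1}{a}\int_{\mathbb{R}^N} V(x)|u|^2\,dx \;\leq\; M\,\|u\|_{X_\eps}^2,
\]
with $M:=\max\bigl(C^2|V^a|^{2\alpha/N},\,1/a\bigr)$. Consequently $\|u\|_{\alpha,A_\eps}^2 \leq (1+M)\|u\|_{X_\eps}^2$, which is precisely the continuity of the injection $X_\eps\hookrightarrow H^{\alpha}_{A_\eps}(\mathbb{R}^N,\mathbb{C})$.

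There is no real technical obstacle here; the argument is the standard Bartsch--Wang type splitting adapted to the magnetic fractional setting. The only point to be careful about is that the Sobolev inequality invoked must be the one for the magnetic homogeneous space $D^\alpha_{A_\eps}$ (not merely for $H^\alpha$ of real-valued functions), so that one may use $[u]_{\alpha,A_\eps}$ on the right-hand side; this is exactly what the first embedding in Proposition \ref{th2.1} provides.
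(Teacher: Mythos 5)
Your argument is correct and is essentially the paper's own proof: the same splitting of the integral over $V^a$ and its complement, the same use of H\"older with exponent $2\alpha/N = 1 - 2/2^*_\alpha$ on the finite-measure set $V^a$, the magnetic Sobolev embedding from Proposition~\ref{th2.1}, and the bound $V(x)\geq a$ on $(V^a)^c$; the paper merely packages the algebra slightly differently (splitting $[u]_{\alpha,A_\eps}^2$ in half and taking a minimum of constants) while you bound $\|u\|_{L^2}^2$ directly. One small caveat: the constant in the magnetic Sobolev embedding is denoted $S_\alpha^\eps$ in the paper and a priori could depend on $\eps$ (the paper only later, in Proposition~\ref{Const-lim}, shows $\eps$-uniform bounds via the diamagnetic inequality), so writing $C=C(N,\alpha)$ slightly overstates the uniformity — though this is harmless for the continuity of the embedding at fixed $\eps$.
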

	\begin{proof}
		Let $a>0$ be as in assumption $(V_2)$. For any $u\in X_\eps$,  we obtain
		$$
		\int_{\mathbb{R}^{N}}V(x)|u|^2\,dx=\int_{\mathbb{R}^{N}\backslash V^a}V(x)|u|^2\,dx+\int_{V^a}V(x)|u|^2\,dx.
		$$
		By the H\"{o}lder inequality,
		\begin{equation*}
			\int_{V^a}|u|^2\,dx\leq|V^a|^{1-\frac{2}{2_\alpha^*}}\left(\int_{V^a}|u|^{2_\alpha^*}\,dx\right)^{\frac{2}{2_\alpha^*}}
			\leq\frac{1}{S_{\alpha}^\eps}|V^a|^{1-\frac{2}{2_\alpha^*}}[u]_{\alpha,A_{\varepsilon}}^2,
		\end{equation*}
		where $|\cdot|$ denotes the Lebesgue measure and $S_{\alpha}^\eps$ is the best Sobolev constant of  the magnetic Sobolev embedding $D^{\alpha}_{A_{\varepsilon}}(\mathbb{R}^{N},\mathbb{C})\hookrightarrow L^{2_{\alpha}^{*}}(\mathbb{R}^{N},\mathbb{C})$, 
		\begin{eqnarray}\label{5}
			S_{\alpha}^\eps:=\inf_{u\in D^{\alpha}_{A_{\varepsilon}}(\mathbb{R}^{N})\setminus\{0\}}
			\frac{[u]_{\alpha,_{A_\varepsilon}}^2}{\|u\|_{L^{2_{\alpha}^{*}}}^{2}}.
		\end{eqnarray}
		Then, it follows from condition $(V_2)$ that
		\begin{eqnarray*}
			\begin{split}
				\|u\|_{X_\eps}^2&\geq\frac{1}{2}[u]_{\alpha,A_{\varepsilon}}^2
				+\frac{1}{2}[u]_{\alpha,A_{\varepsilon}}^2
				+\int_{\mathbb{R}^{N}\backslash V^a}V(x)|u|^2\,dx\\
				&\geq\frac{1}{2}[u]_{\alpha,A_{\varepsilon}}^2
				+\frac{1}{2}S_{\alpha}^\eps|V^a|^{\frac{2}{2_\alpha^*}-1}\int_{V^a}|u|^2\,dx
				+a\int_{\mathbb{R}^{N}\backslash V^a}|u|^2\,dx\\
				&\geq\min\left\{\frac{1}{4},\frac{1}{4}S_{\alpha}^\eps|V^a|^{\frac{2}{2_\alpha^*}-1},\frac{a}{2}\right\}\|u\|_{\alpha,A_{\varepsilon}}^2,
			\end{split}
		\end{eqnarray*}
		which implies that $X_\eps$ is continuously embedded in $H^{\alpha}_{A_{\varepsilon}}(\mathbb{R}^{N},\mathbb{C})$.
	\end{proof}

	\section{Preliminary results}\label{sec3}
	
	Throughout this section, we assume that conditions
	$(f_1)$-$(f_4)$, $(V_1)$-$(V_2)$ and $(K)$ are satisfied.
	Without loss of generality, we assume that
	$$
	V(0)=\min_{x\in\mathbb{R}^{N}}V(x)=0.
	$$
	To obtain the solution of problem (\ref{p}), we will  use  the following equivalent form
	\begin{equation}\label{23}
		(-\Delta)^{\alpha}_{A_{\varepsilon}}u+\varepsilon^{-2\alpha} V(x)u=\varepsilon^{-2\alpha} f(x,|u|)u+\varepsilon^{-2\alpha}K(x)|u|^{2_{\alpha}^{*}-2}u,
	\end{equation}
	where $\varepsilon>0$. The energy
	functional associated with (\ref{23}) on $X_\eps$ is defined as follows
	\begin{eqnarray*}
		\begin{split}
			I_\varepsilon(u):=\frac{1}{2}[u]_{\alpha,A_{\varepsilon}}^2+\frac{\varepsilon^{-2\alpha}}{2}\int_{\mathbb{R}^{N}}V(x)|u|^2\,dx
			-\varepsilon^{-2\alpha}\int_{\mathbb{R}^{N}} F(x,|u|)\,dx-\frac{\varepsilon^{-2\alpha}}{2_{\alpha}^{*}}\int_{\mathbb{R}^{N}}K(x)|u|^{2_{\alpha}^{*}}\,dx.
		\end{split}
	\end{eqnarray*}
	It is easy to check that $I_\varepsilon\in
	C^{1}(X_\eps,\,\mathbb{R})$ and that any critical point for $I_{\varepsilon}$ is a weak solution of  problem (\ref{23}).
	In the following, let $\{u_{n}\}_{n\in\mathbb N}$ be a $\mathrm{(PS)_c}$ sequence for $I_{\varepsilon}$, namely $I_{\varepsilon}(u_n)\rightarrow c$ and $I'_{\varepsilon}(u_n)\rightarrow 0$ in $X^*_\eps$, as $n\rightarrow\infty$, where $X^*_\eps$ is the dual space of $X_\eps$. 
	
	\noindent
	By standard arguments, we get that $\{u_{n}\}_{n\in \mathbb N}$ is bounded in $X_\eps$. Passing to a subsequence, still denoted by $\{u_n\}_{n\in \mathbb N}$,  we assume that $u_n\rightarrow u$ weakly in $X_\eps$, $u_n\rightarrow u$ in $L^2_{{\rm loc}}(\mathbb{R}^{N},\mathbb{C})$, $L^p_{{\rm loc}}(\mathbb{R}^{N},\mathbb{C})$ and $u_n(x)\rightarrow u(x)$ a.e.\ in $\mathbb{R}^{N}$.\  It is easy to verify that $I_{\varepsilon}'(u)=0$ and $I_{\varepsilon}(u)\geq0$.
	Due to the loss of compactness for  the critical embedding, we do not expect that  the energy functional $I_\varepsilon$  satisfies the Palais-Smale condition ((PS) condition for short) at any positive energy level, which makes the study via variational methods rather complicated.
	As in the celebrated contribution by {\em Br\'{e}zis} and {\em Nirenberg}  \cite{BN}, we show that the (PS) condition holds for energy level less than some positive constant. Then, by the Minimax Theorem, we get the existence of solutions to \eqref{23}.
	\vskip4pt
	\noindent
	First of all, we give some preliminary results to show that $I_\varepsilon$ satisfies 
	the $\mathrm{(PS)_c}$ at energy levels $c$ below some constant.
	From now on $\{u_{n}\}_{n\in \mathbb N}$ denotes the aforementioned $\mathrm{(PS)_c}$ sequence.
	
	\begin{lemma}[Vanishing]
		\label{vanish}  There is a subsequence $\{u_{n_j}\}_{j\in{\mathbb N}}$ of the $\mathrm{(PS)_c}$ sequence
		$\{u_{n}\}_{n\in{\mathbb N}}\subset X_\eps$ such that for any $\sigma>0$, there exists $r_{\sigma}>0$, which satisfies
		\begin{equation}\label{4}
			\limsup_{j\rightarrow\infty}\int_{B_j\setminus B_r}|u_{n_j}|^s\,dx\leq\sigma
		\end{equation}
		for any $r\geq r_{\sigma}$, where $s=2$ or $s=p$, and $B_r=\{x\in\mathbb{R}^{N}:|x|<r\}$.
	\end{lemma}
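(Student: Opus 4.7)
The strategy is to split off the weak limit $u$ of (a subsequence of) $\{u_n\}$ and then control the residual by a diagonal extraction. Since $\{u_n\}$ is a $\mathrm{(PS)_c}$ sequence, it is bounded in $X_\eps$ (as stated just before the lemma); by Propositions~\ref{th2.2} and~\ref{th2.1} it is therefore bounded in $L^s(\mathbb R^N,\mathbb C)$ for both $s=2$ and $s=p$ (recall $2<p<2_\alpha^*$). Passing to a (relabelled) subsequence, I may assume $u_n \rightharpoonup u$ in $X_\eps$, $u_n \to u$ almost everywhere on $\mathbb R^N$, and $u_n \to u$ in $L^\nu_{\rm loc}(\mathbb R^N,\mathbb C)$ for every $\nu \in [1,2_\alpha^*)$. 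The weak limit $u$ lies in $L^2\cap L^p$, so $\int_{|x|\ge r}|u|^s\,dx \to 0$ as $r \to \infty$ for $s \in \{2,p\}$.

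Set $v_n := u_n - u$. Then $v_n \to 0$ in $L^2_{\rm loc}$ and in $L^p_{\rm loc}$, while $\{v_n\}$ remains bounded in $L^2\cap L^p$. For each $j\in\mathbb N$ I then select $n_j$ (strictly increasing in $j$) so that
\[
\int_{B_j}|v_{n_j}|^2\,dx + \int_{B_j}|v_{n_j}|^p\,dx < \frac{1}{j};
\]
this is a standard diagonal choice, since for each fixed $j$ both integrals above (with $n_j$ replaced by $n$) vanish as $n \to \infty$.

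Finally, the elementary pointwise inequality $|a+b|^s \le 2^{s-1}(|a|^s+|b|^s)$, applied to $u_{n_j} = u + v_{n_j}$, yields
\[
\int_{B_j\setminus B_r}|u_{n_j}|^s\,dx \;\le\; 2^{s-1}\int_{B_r^c}|u|^s\,dx + 2^{s-1}\int_{B_j}|v_{n_j}|^s\,dx.
\]
Given $\sigma>0$, one picks $r_\sigma$ large enough that $2^{s-1}\int_{|x|\ge r_\sigma}|u|^s\,dx \le \sigma$ for both $s=2$ and $s=p$; then for every $r\ge r_\sigma$,
\[
\limsup_{j\to\infty}\int_{B_j\setminus B_r}|u_{n_j}|^s\,dx \;\le\; \sigma + 2^{s-1}\lim_{j\to\infty}\frac{1}{j} \;=\; \sigma,
\]
which is the desired estimate.

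The only delicate point is the bookkeeping of the diagonal extraction: a single subsequence must serve both exponents $s=2$ and $s=p$, which is transparent from the joint selection above. The fact that $u_n$ is complex-valued causes no issue, as the pointwise inequality depends only on the moduli $|u|$ and $|v_{n_j}|$.
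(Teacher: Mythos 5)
Your proof is correct and follows essentially the same route as the paper's: a diagonal selection of $n_j$ exploiting the strong convergence $u_n\to u$ in $L^2_{\rm loc}$ and $L^p_{\rm loc}$ on the balls $B_j$, combined with the smallness of the tail $\int_{\mathbb{R}^N\setminus B_r}|u|^s\,dx$ for $r\geq r_\sigma$. The only cosmetic difference is that you split $u_{n_j}=u+v_{n_j}$ and invoke $|a+b|^s\leq 2^{s-1}(|a|^s+|b|^s)$, absorbing the constant into the choice of $r_\sigma$, whereas the paper uses the exact algebraic decomposition of $\int_{B_j\setminus B_r}|u_{n_j}|^s\,dx$ and lets the term $\int_{B_r}|u|^s\,dx-\int_{B_r}|u_{n_j}|^s\,dx$ vanish by the same local convergence.
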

	\begin{proof}
		For any $r>0$, $\int_{B_r}|u_n|^s\,dx\rightarrow\int_{B_r}|u|^s\,dx$ as $n\rightarrow\infty$. Then,
		there exists $n_j\in\mathbb{N}$ with $n_{j+1}>n_{j}$ such that
		$$
		\int_{B_j}|u_{n_j}|^s\,dx-\int_{B_j}|u|^s\,dx<\frac{1}{j}.
		$$
		For any $\sigma>0$, there exists $r_{\sigma}>0$ such that for any $r\geq r_{\sigma}$,
		$$\int_{\mathbb{R}^{N}\setminus B_r}|u|^s\,dx<\sigma.$$
		If $j>r_{\sigma}$, we have
		\begin{eqnarray*}
			\begin{split}
				\int_{B_j\setminus B_r}|u_{n_j}|^s\,dx
				=&\int_{B_j}|u_{n_j}|^s\,dx-\int_{B_j}|u|^s\,dx+\int_{B_j\setminus B_r}|u|^s\,dx
				+\int_{B_r}|u|^s\,dx-\int_{B_r}|u_{n_j}|^s\,dx\\
				<&\frac{1}{j}+\sigma+\int_{B_r}|u|^s\,dx-\int_{B_r}|u_{n_j}|^s\,dx,
			\end{split}
		\end{eqnarray*}
		for any $r\geq r_{\sigma}$, which yields the desired assertion.
	\end{proof}
	
	\noindent
	Take $\varphi\in C_{0}^{\infty}(\mathbb{R}^N)$  such that $0\leq\varphi\leq1$, 
	$\varphi(x)=1$ for $|x|\leq1$ and $\varphi(x)=0$ for $|x|\geq2$. Define
	$$
	\widehat{u}_{j}(x):=\varphi_{j}(x)u(x),\quad\,\,\, \varphi_{j}(x):=\varphi\Big(\frac{2x}{j}\Big),\quad j\in\mathbb{N}.
	$$
	Then we have the following preliminary result.
	
	\begin{lemma}[Stability of truncation]
		\label{Stab}
		For any $\varepsilon>0$, $\|\widehat{u}_{j}-u\|_{\varepsilon}\rightarrow0$ as $j\rightarrow\infty$.
	\end{lemma}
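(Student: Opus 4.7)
My plan is to write $w_j := \widehat{u}_j - u = \eta_j u$ with $\eta_j := \varphi_j - 1$; then $\eta_j$ vanishes on $B_{j/2}$, satisfies $|\eta_j| \leq 1$, has Lipschitz constant $O(1/j)$, and $\eta_j(x) \to 0$ for every fixed $x\in\mathbb{R}^N$. Since $\|w_j\|_{\varepsilon}^2$ splits as the magnetic Gagliardo seminorm of $w_j$ plus the potential term $\varepsilon^{-2\alpha}\int_{\mathbb{R}^{N}} V|w_j|^2\,dx$, and $V|w_j|^2 \leq V|u|^2 \in L^1$ (because $u\in X_\varepsilon$) with $w_j\to 0$ pointwise, the potential part vanishes immediately by dominated convergence. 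The whole difficulty therefore reduces to showing $[w_j]_{\alpha, A_\varepsilon}\to 0$.

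For the seminorm I would use the magnetic Leibniz-type identity
\[
w_j(x) - e^{\im(x-y)\cdot A_\varepsilon(\frac{x+y}{2})}\, w_j(y) = \eta_j(x)\bigl[u(x) - e^{\im(x-y)\cdot A_\varepsilon(\frac{x+y}{2})} u(y)\bigr] + (\eta_j(x) - \eta_j(y))\, e^{\im(x-y)\cdot A_\varepsilon(\frac{x+y}{2})}\, u(y),
\]
together with $|a+b|^2 \leq 2|a|^2 + 2|b|^2$, to bound $[w_j]_{\alpha, A_\varepsilon}^2 \leq 2\bigl(I_j^{(1)} + I_j^{(2)}\bigr)$, a \emph{localization} term and a \emph{commutator} term. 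For $I_j^{(1)}$ the integrand equals $\eta_j(x)^2$ times the magnetic Gagliardo density of $u$; since $\eta_j^2 \leq 1$ and this density lies in $L^1(\mathbb{R}^{2N})$ with $\eta_j(x)^2 \to 0$ pointwise, dominated convergence yields $I_j^{(1)}\to 0$ at once.

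The main obstacle is the commutator term $I_j^{(2)}$. Using $|e^{\im\theta}|=1$ and the cancellation $\eta_j(x) - \eta_j(y) = \varphi_j(x) - \varphi_j(y)$, I would rewrite
\[
I_j^{(2)} = \int_{\mathbb{R}^N} |u(y)|^2\, g_j(y)\,dy, \qquad g_j(y) := \int_{\mathbb{R}^N} \frac{(\varphi_j(x)-\varphi_j(y))^2}{|x-y|^{N+2\alpha}}\,dx,
\]
and then establish two facts about $g_j$. First, splitting the inner integral at $|x-y|=1$ and using $|\varphi_j(x)-\varphi_j(y)| \leq (C/j)|x-y|$ on the short range (where $|x-y|^{2-N-2\alpha}$ is integrable near $0$ since $\alpha<1$) together with $|\varphi_j(x)-\varphi_j(y)| \leq 2$ on the long range (where $|x-y|^{-N-2\alpha}$ is integrable at infinity since $\alpha>0$), yields a uniform bound $g_j \leq M$. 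Second, for a.e.\ fixed $y$ and all $j > 2|y|$ one has $\varphi_j(y) = 1$, whereupon the integrand is supported in $\{|x|\geq j/2\}$ and a direct computation gives $g_j(y) = O(j^{-2\alpha}) \to 0$. A final application of dominated convergence, with $|u|^2\in L^1$ via Propositions~\ref{th2.1}--\ref{th2.2}, gives $I_j^{(2)}\to 0$. The delicate point, and the one I expect to be the main hurdle, is calibrating the short-range Lipschitz estimate against the singular kernel carefully enough to produce a uniform dominating function without losing the pointwise decay in $y$; this is the standard trade-off that governs cut-off arguments in the fractional magnetic setting.
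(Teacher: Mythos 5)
Your proposal is correct and follows essentially the same route as the paper: the same Leibniz-type splitting of $w_j(x)-e^{\im\theta}w_j(y)$ into a localization term carrying the magnetic Gagliardo density of $u$ and a commutator term carrying $(\varphi_j(x)-\varphi_j(y))^2$, with the former dispatched by dominated convergence. For the commutator term you isolate $g_j(y)=\int(\varphi_j(x)-\varphi_j(y))^2|x-y|^{-N-2\alpha}\,dx$, prove $\sup_j\|g_j\|_\infty<\infty$ by the short-range Lipschitz/long-range boundedness split, show $g_j(y)\to 0$ pointwise, and finish by dominated convergence against $|u|^2\in L^1$. The paper instead carries out an explicit region-by-region estimate producing a bound of the form $Cj^{-2\alpha}+Ck^{-N}$ and takes a double limit $j\to\infty$, then $k\to\infty$; the two arguments cover identical ground but your packaging via a uniform bound plus DCT is the cleaner of the two and avoids the auxiliary parameter $k$ entirely.
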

	\begin{proof}
		It is readily seen that 
		\begin{eqnarray}\label{37}
			\begin{split}
				[\widehat{u}_{j}-u]_{\alpha,A_{\varepsilon}}
				\leq&2\int_{\mathbb{R}^{2N}}\frac{u^2(x)(\varphi_j(x)-\varphi_j(y))^2}{|x-y|^{N+2\alpha}}\,dxdy\\
				&+2\int_{\mathbb{R}^{2N}}\frac{(\varphi_j(y)-1)^2\big|u(x)-e^{\im(x-y)\cdot A_{\varepsilon}(\frac{x+y}{2})}u(y)\big|^2}{|x-y|^{N+2\alpha}}\,dxdy.
			\end{split}
		\end{eqnarray}
		Note that $u\in X_\eps$, $|\varphi_j(y)-1|\leq2$ and $\varphi_j(y)-1\rightarrow0$ a.e.\ as $j\to\infty$.\ Then,  the Dominated Convergence Theorem yields
		\begin{equation*}
			\int_{\mathbb{R}^{2N}}\frac{(\varphi_j(y)-1)^2\big|u(x)-e^{\im(x-y)\cdot A_{\varepsilon}(\frac{x+y}{2})}u(y)\big|^2}{|x-y|^{N+2\alpha}}\,dxdy\rightarrow0
		\end{equation*}
		as $j\rightarrow\infty$. In the following, we will prove that
		$$
		\int_{\mathbb{R}^{2N}}\frac{u^2(x)(\varphi_j(x)-\varphi_j(y))^2}{|x-y|^{N+2\alpha}}\,dxdy\rightarrow0 \ \ \ \mbox{as}\ j\rightarrow \infty.
		$$
		Note that
		\begin{displaymath}
			\begin{split}
				\mathbb{R}^{N}\times\mathbb{R}^{N}
				=&((\mathbb{R}^{N}\setminus B_j)\cup B_j)
				\times((\mathbb{R}^{N}\setminus B_j)\cup B_j)\\
				=&((\mathbb{R}^{N}\setminus B_j)\times(\mathbb{R}^{N}\setminus B_j))\cup(B_j\times\mathbb{R}^{N})\cup((\mathbb{R}^{N}\setminus B_j)\times B_j).
			\end{split}
		\end{displaymath}
		(i) $(x,y)\in(\mathbb{R}^{N}\setminus B_j)\times(\mathbb{R}^{N}\setminus B_j)$, we have $\varphi_{j}(x)=\varphi_{j}(y)=0$. \newline
		(ii) $(x,y)\in B_j\times\mathbb{R}^{N}$. One has
		\begin{displaymath}
			\begin{split}
				&\int_{B_j}dx\int_{\{y\in\mathbb{R}^{N}:|x-y|\leq\frac{1}{2}j\}}\frac{u^{2}(x)|\varphi_{j}(x)-\varphi_{j}(y)|^{2}}{|x-y|^{N+2\alpha}}dy\\
				&=\int_{B_j}dx\int_{\{y\in\mathbb{R}^{N}:|x-y|\leq\frac{1}{2}j\}}\frac{u^{2}(x)|\nabla\varphi(\xi)|^{2}|\frac{2(x-y)}{j}|^{2}}{|x-y|^{N+2\alpha}}dy\\
				&\leq Cj^{-2}
				\int_{B_j}dx\int_{\{y\in\mathbb{R}^{N}:|x-y|\leq\frac{1}{2}j\}}\frac{u^{2}(x)}{|x-y|^{N+2\alpha-2}}dy\\
				&=Cj^{-2\alpha}
				\int_{B_j}u^{2}(x)dx,
			\end{split}
		\end{displaymath}
		where $\xi=\frac{2y}{j}+\tau\frac{2(x-y)}{j}$, $\tau\in(0,1)$ and
		\begin{displaymath}
			\begin{split}
				&\int_{B_j}dx\int_{\{y\in\mathbb{R}^{N}:|x-y|>\frac{1}{2}j\}}\frac{u^{2}(x)|\varphi_{j}(x)-\varphi_{j}(y)|^{2}}{|x-y|^{N+2\alpha}}dy\\
				&\leq C
				\int_{B_j}dx\int_{\{y\in\mathbb{R}^{N}:|x-y|>\frac{1}{2}j\}}\frac{u^{2}(x)}{|x-y|^{N+2\alpha}}dy\\
				&=Cj^{-2\alpha}
				\int_{B_j}u^{2}(x)dx.
			\end{split}
		\end{displaymath}
		(iii) $(x,y)\in(\mathbb{R}^{N}\setminus B_j)\times B_j$.
		If $|x-y|\leq\frac{1}{2}j$, then $|x|\leq|x-y|+|y|\leq\frac{3}{2}j.$ Furthermore,
		\begin{displaymath}
			\begin{split}
				&\int_{\mathbb{R}^{N}\setminus B_j}dx\int_{\{y\in B_j:|x-y|\leq\frac{1}{2}j\}}\frac{u^{2}(x)|\varphi_{j}(x)-\varphi_{j}(y)|^{2}}{|x-y|^{N+2\alpha}}dy\\
				&\leq Cj^{-2}
				\int_{B_{\frac{3}{2}j}}dx\int_{\{y\in B_j:|x-y|\leq\frac{1}{2}j\}}\frac{u^{2}(x)}{|x-y|^{N+2\alpha-2}}dy\\
				&\leq Cj^{-2\alpha}
				\int_{B_{\frac{3}{2}j}}u^{2}(x)dx.
			\end{split}
		\end{displaymath}
		Notice that, for any $k>4$, there holds
		$$
		\mathbb{R}^{N}\setminus B_j
		\subset B_{\frac{k}{2}j}\cup (\mathbb{R}^{N}\setminus B_{\frac{k}{2}j}).
		$$
		If $|x-y|>\frac{1}{2}j$, then we obtain
		\begin{displaymath}
			\begin{split}
				&\int_{B_{\frac{k}{2}j}}dx\int_{\{y\in B_j:|x-y|>\frac{1}{2}j\}}\frac{u^{2}(x)|\varphi_{j}(x)-\varphi_{j}(y)|^{2}}{|x-y|^{N+2\alpha}}dy\\
				&\leq C
				\int_{B_{\frac{k}{2}j}}dx\int_{\{y\in B_j:|x-y|>\frac{1}{2}j\}}\frac{u^{2}(x)}{|x-y|^{N+2\alpha}}dy\\
				&\leq Cj^{-2\alpha}
				\int_{B_{\frac{k}{2}j}}u^{2}(x)dx.
			\end{split}
		\end{displaymath}
		If $(x,y)\in(\mathbb{R}^{N}\setminus B_{\frac{k}{2}j})\times B_j$, then
		$|x-y|\geq|x|-|y|\geq\frac{|x|}{2}+\frac{k}{4}j-j>\frac{|x|}{2}.$
		H\"{o}lder inequality yields
		\begin{displaymath}
			\begin{split}
				&\int_{\mathbb{R}^{N}\setminus B_{\frac{k}{2}j}}dx\int_{\{y\in B_j:|x-y|>\frac{1}{2}j\}}\frac{u^{2}(x)|\varphi_{j}(x)-\varphi_{j}(y)|^{2}}{|x-y|^{N+2\alpha}}dy\\
				&\leq C
				\int_{\mathbb{R}^{N}\setminus B_{\frac{k}{2}j}}dx\int_{\{y\in B_j:|x-y|>\frac{1}{2}j\}}\frac{u^{2}(x)}{|x|^{N+2\alpha}}dy\\
				&\leq Cj^{N}
				\int_{\mathbb{R}^{N}\setminus B_{\frac{k}{2}j}}\frac{u^{2}(x)}{|x|^{N+2\alpha}}dx\\
				&\leq Ck^{-N}\left(\int_{\mathbb{R}^{N}\setminus B_{\frac{k}{2}j}}|u(x)|^{2_{\alpha}^{*}}\,dx\right)^{\frac{2}{2_{\alpha}^{*}}}.
			\end{split}
		\end{displaymath}
		By combining (i), (ii) and (iii), we get
		\begin{displaymath}
			\begin{split}
				&\int_{\mathbb{R}^{2N}}\frac{u^{2}(x)|\varphi_{j}(x)-\varphi_{j}(y)|^{2}}{|x-y|^{N+2\alpha}}\,dxdy\\
				&=\left(\int_{B_j\times\mathbb{R}^{N}} +\int_{(\mathbb{R}^{N}\setminus B_j)\times B_j}\right) \frac{u^{2}(x)|\varphi_{j}(x)-\varphi_{j}(y)|^{2}}{|x-y|^{N+2\alpha}}\,dxdy\\
				&\leq Cj^{-2\alpha}\int_{B_{\frac{k}{2}j}}u^{2}(x)\,dx+Ck^{-N}\left(\int_{\mathbb{R}^{N}\setminus B_{\frac{k}{2}j}}|u(x)|^{2_{\alpha}^{*}}\,dx\right)^{2/2_{\alpha}^{*}}\\
				&\leq Cj^{-2\alpha}+Ck^{-N}.
			\end{split}
		\end{displaymath}
		Therefore, we have
		\begin{eqnarray}\label{39}
			\begin{split}
				&\limsup_{j\rightarrow\infty}
				\int_{\mathbb{R}^{2N}}\frac{u^{2}(x)(\varphi_{j}(x)-\varphi_{j}(y))^{2}}{|x-y|^{N+2\alpha}}\,dxdy\\
				&=\lim_{k\rightarrow\infty}\limsup_{j\rightarrow\infty}
				\int_{\mathbb{R}^{2N}}\frac{u^{2}(x)(\varphi_{j}(x)-\varphi_{j}(y))^{2}}{|x-y|^{N+2\alpha}}\,dxdy=0.
			\end{split}
		\end{eqnarray}
		It follows from \eqref{37} and \eqref{39} that
		$$
		[\widehat{u}_{j}-u]_{\alpha,A_{\varepsilon}}\rightarrow 0, \ \ \mbox{as}\  j\rightarrow\infty.
		$$
		Note that, as $j\rightarrow\infty$,
		$$
		\int_{\mathbb{R}^{N}}V(x)|\widehat{u}_{j}(x)-u(x)|^2\,dx=\int_{\mathbb{R}^{N}}V(x)(\varphi_j(x)-1)^2u^2(x)\,dx\rightarrow0,
		$$
		we deduce from the Dominated Convergence Theorem that
		$$
		\int_{\mathbb{R}^{N}}V(x)|\widehat{u}_{j}(x)-u(x)|^2\,dx\rightarrow 0,
		$$ 
		as $j\rightarrow\infty$. Thus $\|\widehat{u}_{j}-u\|_{\varepsilon}\rightarrow0$ as $j\rightarrow\infty$.
	\end{proof}
	
	\begin{lemma}[Shifted Palais-Smale]
		\label{Shift}
Let $\{u_{n_j}\}_{j\in {\mathbb N}}\subset X_\eps$ be the sequence introduced in Lemma~\ref{vanish}.
		Moreover, for any $j\in\mathbb{N}$, let us denote 
		$$
		u^{1}_{n_j}:=u_{n_j}-\widehat{u}_j,\quad j\geq 1.
		$$
		Then, 
		$I_{\varepsilon}(u^{1}_{n_j})\rightarrow c-I_{\varepsilon}(u)$ and
		$I_{\varepsilon}'(u^{1}_{n_j})\rightarrow 0$ in $X^*_\eps$ as $j\rightarrow\infty$.
	\end{lemma}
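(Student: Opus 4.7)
The strategy is a Br\'{e}zis--Lieb/Struwe-type splitting of both the energy and its derivative along the shifted sequence $u^1_{n_j}=u_{n_j}-\widehat{u}_j$. The starting point is that, by Lemma~\ref{Stab}, $\widehat{u}_j\to u$ strongly in $X_\eps$, so $\widehat{u}_j$ is interchangeable with $u$ up to $o(1)$ errors in continuous functionals; in particular $u^1_{n_j}\rightharpoonup 0$ weakly in $X_\eps$, $u^1_{n_j}(x)\to 0$ a.e., and the sequence stays bounded.

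For the energy identity $I_\eps(u^1_{n_j})=I_\eps(u_{n_j})-I_\eps(u)+o(1)$, I would treat the three terms of $I_\eps$ separately. The quadratic part splits by expanding
\[
\|u_{n_j}-\widehat{u}_j\|_\eps^2=\|u_{n_j}\|_\eps^2-2\langle u_{n_j},\widehat{u}_j\rangle_\eps+\|\widehat{u}_j\|_\eps^2,
\]
since weak convergence of $u_{n_j}$ combined with Lemma~\ref{Stab} gives $\langle u_{n_j},\widehat{u}_j\rangle_\eps\to\|u\|_\eps^2$. For the critical term, I would first replace $\widehat{u}_j$ by $u$ at $o(1)$ cost (using the $L^{2_\alpha^*}$ continuity from Proposition~\ref{th2.1}) and then apply the Br\'{e}zis--Lieb lemma with the bounded weight $K$ granted by $(K)$. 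For the subcritical term $\int F(x,|u^1_{n_j}|)\,dx$, I would decompose $\mathbb{R}^N$ as $B_r\cup(B_j\setminus B_r)\cup(\mathbb{R}^N\setminus B_j)$: on $B_r$ (for $j\geq 2r$) one has $\widehat{u}_j=u$ and local strong $L^p$ convergence applies; on $B_j\setminus B_r$ the Vanishing Lemma~\ref{vanish} together with $(f_1)$ controls the integral uniformly in $j$ once $r$ is large; on $\mathbb{R}^N\setminus B_j$ one has $\widehat{u}_j\equiv 0$, so $u^1_{n_j}=u_{n_j}$ and the tail is matched with the corresponding tail of $u$.

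For the derivative, the quadratic (magnetic Gagliardo and $V$-weighted) contributions to $\langle I_\eps'(u^1_{n_j}),v\rangle$ are linear in $u^1_{n_j}$ and therefore split exactly. Combined with $I_\eps'(\widehat{u}_j)\to I_\eps'(u)=0$ (continuity of $I_\eps'$ together with Lemma~\ref{Stab}) and $I_\eps'(u_{n_j})\to 0$, the problem reduces to showing, uniformly for $\|v\|_{X_\eps}\leq 1$, that the nonlinear defect
\[
\int_{\mathbb{R}^N}\bigl[g(x,u_{n_j})-g(x,\widehat{u}_j)-g(x,u^1_{n_j})\bigr]\overline{v}\,dx
\]
is $o(1)$, where $g(x,w)$ stands for either $f(x,|w|)w$ or $K(x)|w|^{2_\alpha^*-2}w$. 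This uniform control is the main obstacle: for the critical piece I would exploit the pointwise convergence $u^1_{n_j}\to 0$, the uniform $L^{2_\alpha^*}$ bound, and a vector-valued Br\'{e}zis--Lieb-type inequality $\bigl||a+b|^{r-2}(a+b)-|a|^{r-2}a-|b|^{r-2}b\bigr|\leq C\bigl(|a|^{r-2}|b|+|a||b|^{r-2}\bigr)$ with $r=2_\alpha^*$, followed by H\"older's inequality against $\overline{v}$; for the subcritical piece the same tripartite decomposition, combined with $(f_1)$--$(f_2)$ and Lemma~\ref{vanish}, closes the argument.
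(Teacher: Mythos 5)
Your plan follows essentially the same route as the paper's proof: exact splitting of the quadratic (magnetic Gagliardo and $V$-weighted) terms via weak convergence of $u_{n_j}$ and the strong convergence $\widehat{u}_j\to u$ from Lemma~\ref{Stab}, a Br\'ezis--Lieb-type argument for the critical and $F$-terms, and for the derivative the reduction to the nonlinear defect estimated uniformly in $\|v\|_\eps\le 1$ by splitting $\mathbb{R}^N$ into $B_r$, $B_j\setminus B_r$ (handled by Lemma~\ref{vanish} and $(f_1)$--$(f_2)$) and $\mathbb{R}^N\setminus B_j$ where $\widehat{u}_j\equiv 0$. The only difference is that you spell out in slightly more detail the Br\'ezis--Lieb step that the paper compresses into ``arguing as for the Br\'ezis--Lieb Lemma'' and ``a standard argument,'' so the proposal is correct and consistent with the paper.
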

	\begin{proof} 
		 Notice that, it holds
		\begin{eqnarray*}
			\begin{split}
				&I_{\varepsilon}(u^{1}_{n_j})-I_{\varepsilon}(u_{n_j})+I_{\varepsilon}(\widehat{u}_j)\\
				=&\int_{\mathbb{R}^{2N}}\frac{|\widehat{u}_j(x)-e^{\im(x-y)\cdot A_{\varepsilon}(\frac{x+y}{2})}\widehat{u}_j(y)|^2}{|x-y|^{N+2\alpha}}\,dxdy\\
				&-\textup{Re}
				\int_{\mathbb{R}^{2N}}\frac{\left(u_{n_j}(x)-e^{\im(x-y)\cdot A_{\varepsilon}(\frac{x+y}{2})}u_{n_j}(y)\right)
					\overline{\left(\widehat{u}_j(x)-e^{\im(x-y)\cdot A_{\varepsilon}(\frac{x+y}{2})}\widehat{u}_j(y)\right)}}{|x-y|^{N+2\alpha}}\,dxdy\\
				&+\varepsilon^{-2\alpha}\int_{\mathbb{R}^{N}}V(x)|\widehat{u}_j|^2\,dx
				-\varepsilon^{-2\alpha}\textup{Re}\int_{\mathbb{R}^{N}}V(x)u_{n_j}\overline{\widehat{u}_j}\,dx\\
				&+\varepsilon^{-2\alpha}\int_{\mathbb{R}^{N}}(F(x,|u_{n_j}|)-F(x,|u_{n_j}-\widehat{u}_j|)-F(x,|\widehat{u}_j|))\,dx\\
				&+\frac{\varepsilon^{-2\alpha}}{2_{\alpha}^{*}}\int_{\mathbb{R}^{N}}K(x)\left(|u_{n_j}|^{2_{\alpha}^{*}}
				-|u_{n_j}-\widehat{u}_j|^{2_{\alpha}^{*}}-|\widehat{u}_j|^{2_{\alpha}^{*}}\right)\,dx.
			\end{split}
		\end{eqnarray*}
		As $u_{n_{j}}\rightarrow u$ weakly in $X_\eps$ and $\widehat{u}_{j}\rightarrow u$ strongly in $X_\eps$, we could derive that
		\begin{align*}
			\int_{\mathbb{R}^{2N}}&\frac{|\widehat{u}_j(x)-e^{\im(x-y)\cdot A_{\varepsilon}(\frac{x+y}{2})}\widehat{u}_j(y)|^2}{|x-y|^{N+2\alpha}}\,dxdy\\
			&-\textup{Re}
			\int_{\mathbb{R}^{2N}}\frac{\left(u_{n_j}(x)-e^{\im(x-y)\cdot A_{\varepsilon}(\frac{x+y}{2})}u_{n_j}(y)\right)
				\overline{\left(\widehat{u}_j(x)-e^{\im(x-y)\cdot A_{\varepsilon}(\frac{x+y}{2})}\widehat{u}_j(y)\right)}}{|x-y|^{N+2\alpha}}\,dxdy\rightarrow 0
		\end{align*}
		and, as $j\to\infty$,
		$$
		\int_{\mathbb{R}^{N}}V(x)|\widehat{u}_j|^2\,dx
		-\textup{Re}\int_{\mathbb{R}^{N}}V(x)u_{n_j}\overline{\widehat{u}_j}\,dx\rightarrow 0.
		$$
		Arguing as for the proof of  the Br\'{e}zis-Lieb Lemma and recalling that $\widehat u_j\to u$
		strongly in $X_\eps$ as $j\rightarrow\infty$, it is easy to prove that
		\begin{align*}
		&\int_{\mathbb{R}^{N}}(F(x,u_{n_j})-F(x,u_{n_j}-\widehat{u}_j)-F(x,\widehat{u}_j))\,dx\rightarrow 0, \\ 
		&\int_{\mathbb{R}^{N}}K(x)(|u_{n_j}|^{2_{\alpha}^{*}}
		-|u_{n_j}-\widehat{u}_j|^{2_{\alpha}^{*}}-|\widehat{u}_j|^{2_{\alpha}^{*}})\,dx\rightarrow 0.
		\end{align*}
		Thus, $I_{\varepsilon}(u^{1}_{n_j})\rightarrow c-I_{\varepsilon}(u)$, as $j\rightarrow\infty$. 
		Taking now $\phi\in X_\eps$ with $\|\phi\|_{\varepsilon}\leq1$, we obtain
		\begin{eqnarray*}
			\begin{split}
				&\langle I_{\varepsilon}'(u^{1}_{n_j})-I_{\varepsilon}'(u_{n_j})+I_{\varepsilon}'(\widehat{u}_j),\phi\rangle\\
				=&\varepsilon^{-2\alpha}\textup{Re}\int_{\mathbb{R}^{N}}(f(x,|u_{n_j}|)u_{n_j}-f(x,|u_{n_j}-\widehat{u}_j|)(u_{n_j}-\widehat{u}_j)-f(x,|\widehat{u}_j|)\widehat{u}_j)\overline{\phi}\,dx\\
				&+\varepsilon^{-2\alpha}\textup{Re}\int_{\mathbb{R}^{N}}K(x)(|u_{n_j}|^{2_{\alpha}^{*}-2}u_{n_j}
				-|u_{n_j}-\widehat{u}_j|^{2_{\alpha}^{*}-2}(u_{n_j}-\widehat{u}_j)-|\widehat{u}_j|^{2_{\alpha}^{*}-2}\widehat{u}_j)\overline{\phi}\,dx.
			\end{split}
		\end{eqnarray*}
		It follows, again by a standard argument, that
		$$
		\left|\int_{\mathbb{R}^{N}}K(x)(|u_{n_j}|^{2_{\alpha}^{*}-2}u_{n_j}
		-|u_{n_j}-\widehat{u}_j|^{2_{\alpha}^{*}-2}(u_{n_j}-\widehat{u}_j)-|\widehat{u}_j|^{2_{\alpha}^{*}-2}\widehat{u}_j)\overline{\phi}\,dx\right|\rightarrow 0
		$$
		uniformly in $\phi\in X_\eps$ with $\|\phi\|_{\varepsilon}\leq1$, as $j\rightarrow\infty$.
		Meanwhile, we have
		\begin{eqnarray*}
			\begin{split}
				&\left|\int_{\mathbb{R}^{N}}(f(x,|u_{n_j}|)u_{n_j}-f(x,|u_{n_j}-\widehat{u}_j|)(u_{n_j}-\widehat{u}_j)-f(x,|\widehat{u}_j|)\widehat{u}_j)\overline{\phi}\,dx\right|\\
				\leq&\int_{B_r}\left|f(x,|u_{n_j}|)u_{n_j}-f(x,|u_{n_j}-\widehat{u}_j|)(u_{n_j}-\widehat{u}_j)-f(x,|\widehat{u}_j|)\widehat{u}_j\right|\cdot|\phi|\,dx\\
				&+\int_{\mathbb{R}^{N}\setminus B_r}\left|f(x,|u_{n_j}|)u_{n_j}-f(x,|u_{n_j}-\widehat{u}_j|)(u_{n_j}-\widehat{u}_j)-f(x,|\widehat{u}_j|)\widehat{u}_j\right|\cdot|\phi|\,dx
			\end{split}
		\end{eqnarray*}
		for any  $r\geq r_\sigma$, where $r_\sigma>0$ is as in Lemma~\ref{vanish}. Since
		$\widehat{u}_{j}\rightarrow u$ and $u_{n_j}\rightarrow u$ in $L^p(B_r,\mathbb{C})$, we get
		\begin{equation}\label{6}
			\int_{B_r}|f(x,|u_{n_j}|)u_{n_j}-f(x,|u_{n_j}-\widehat{u}_j|)(u_{n_j}-\widehat{u}_j)-f(x,|\widehat{u}_j|)\widehat{u}_j|\cdot|\phi|\,dx\rightarrow 0
		\end{equation}
		uniformly in $\phi\in X_\eps$ with $\|\phi\|_{\varepsilon}\leq1$. By $(f_1)$ and $(f_2)$, for any $t>0$ we obtain
		$$
		|f(x,t)t|\leq C(|t|+|t|^{p-1}),
		$$
		which implies (we recall that $\hat u_j=0$ on ${\mathbb R}^N\setminus B_j$ for any $j\geq 1$)
		\begin{eqnarray*}
			\begin{split}
				&\int_{\mathbb{R}^{N}\setminus B_r}|f(x,|u_{n_j}|)u_{n_j}-f(x,|u_{n_j}-\widehat{u}_j|)(u_{n_j}-\widehat{u}_j)-f(x,|\widehat{u}_j|)\widehat{u}_j|\cdot|\phi|\,dx\\
				=&\int_{B_j\setminus B_r}|f(x,|u_{n_j}|)u_{n_j}-f(x,|u_{n_j}-\widehat{u}_j|)(u_{n_j}-\widehat{u}_j)-f(x,|\widehat{u}_j|)\widehat{u}_j|\cdot|\phi|\,dx\\
				\leq&C\int_{B_j\setminus B_r}(|u_{n_{j}}|+|\widehat{u}_{j}|+|u_{n_{j}}|^{p-1}+|\widehat{u}_{j}|^{p-1})\cdot|\phi|\,dx.
			\end{split}
		\end{eqnarray*}
		For any $\sigma>0$, by inequality \eqref{4}, the H\"{o}lder inequality and Proposition~\ref{th2.2}, we have
		\begin{align*}
				&\limsup_{j\rightarrow\infty}\int_{B_j\setminus B_r}(|u_{n_{j}}|+|u_{n_{j}}|^{p-1})\cdot|\phi|\,dx\\
				\leq&\limsup_{j\rightarrow\infty}\Big(\int_{B_j\setminus B_r}|u_{n_{j}}|^2\,dx\Big)^{\frac{1}{2}}\Big(\int_{B_j\setminus B_r}|\phi|^2\,dx\Big)^{\frac{1}{2}}\\
				+&\limsup_{j\rightarrow\infty}\Big(\int_{B_j\setminus B_r}|u_{n_{j}}|^p\,dx\Big)^{\frac{p-1}{p}}
				\Big(\int_{B_j\setminus B_r}|\phi|^p\,dx\Big)^{\frac{1}{p}}
				\leq C\big(\sigma^{\frac{1}{2}}+\sigma^{\frac{p-1}{p}}\big).
		\end{align*}
		Since $\widehat{u}_{j}\rightarrow u$ in $X_\eps$ as $j\rightarrow \infty$, Proposition \ref{th2.2}  yields that $\widehat{u}_{j}\rightarrow u$ in $L^2(\mathbb{R}^{N},\mathbb{C})$ and $L^p(\mathbb{R}^{N},\mathbb{C})$. Then, by the H\"{o}lder inequality, for any $r\geq r_\sigma$ 
		(up to enlarging $r_\sigma$) we obtain
		\begin{equation}
		\label{8}
				\limsup_{j\rightarrow\infty}\int_{B_j\setminus B_r}(|\widehat{u}_{j}|+|\widehat{u}_{j}|^{p-1})\cdot|\phi|\,dx
				=\int_{\mathbb{R}^{N}\setminus B_r}(|u|+|u|^{p-1})\cdot|\phi|\,dx 
		 		\leq C\left(\sigma^{\frac{1}{2}}+\sigma^{\frac{p-1}{p}}\right).    
		\end{equation}
		From \eqref{6}-\eqref{8}, we have
		\begin{eqnarray*}
			\begin{split}
				\limsup_{j\rightarrow\infty}\int_{\mathbb{R}^{N}}|f(x,|u_{n_j}|)u_{n_j}-f(x,|u_{n_j}-\widehat{u}_j|)(u_{n_j}-\widehat{u}_j)-f(x,|\widehat{u}_j|)\widehat{u}_j|\cdot|\phi|\,dx\leq C\left(\sigma^{\frac{1}{2}}+\sigma^{\frac{p-1}{p}}\right)
			\end{split}
		\end{eqnarray*}
		uniformly in $\phi\in X_\eps$ with $\|\phi\|_{\varepsilon}\leq1$. Letting $\sigma\rightarrow0$ yields,
		$$\limsup_{j\rightarrow\infty}\int_{\mathbb{R}^{N}}|f(x,|u_{n_j}|)u_{n_j}-f(x,|u_{n_j}-
		\widehat{u}_j|)(u_{n_j}-\widehat{u}_j)-f(x,|\widehat{u}_j|)\widehat{u}_j|\cdot|\phi|\,dx=0.$$
		As $I'_{\varepsilon}(u_{n_j})\rightarrow0$ and $I'_{\varepsilon}(\widehat{u}_j)\rightarrow I'_{\varepsilon}(u)=0$, we get that $I_{\varepsilon}'(u^{1}_{n_j})\rightarrow0$, as $j\rightarrow\infty$.
	\end{proof}
	
	\noindent
	In what follows, we will show that for any $\varepsilon>0$,  $I_\varepsilon$ satisfies 
	$\mathrm{(PS)_c}$ condition for energy level $c$ below some positive constant depending on $\varepsilon$.
	
	\begin{lemma}[Palais-Smale]
		\label{Th3.1} 
		Let $K_0,K_1>0$ and $\mu>2$ be as in conditions $(K)$ and $(f_3)$ and let us denote by $c_2>0$
		a suitable constant depending upon $f$. Then, for any $\varepsilon>0$, if 
		$$
		-\infty<c<C_0(\eps)\varepsilon^{N-2\alpha}, \qquad 
		C_{0}(\eps):=\left(\frac{S_{\alpha}^\eps}{c_2+K_1}\right)^{\frac{2^{*}_{\alpha}}{2^{*}_{\alpha}-2}}\frac{(2^{*}_{\alpha}-\mu)K_0}{\mu 2^{*}_{\alpha}}.
		$$
		then
		$u_{n_j}\rightarrow u$ in $X_\eps$ as $j\rightarrow\infty$.
	\end{lemma}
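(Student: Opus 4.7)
The plan is to argue by contradiction, combining the shifted Palais--Smale structure from Lemma~\ref{Shift} with a Br\'ezis--Nirenberg type concentration estimate. Set $v_j:=u^{1}_{n_j}=u_{n_j}-\widehat u_j$; by Lemmas~\ref{Stab} and~\ref{Shift} this sequence is a $\mathrm{(PS)}_{c-I_\eps(u)}$ sequence for $I_\eps$ with $v_j\rightharpoonup 0$ in $X_\eps$. Since it has already been observed that $I_\eps(u)\geq 0$, showing $v_j\to 0$ strongly in $X_\eps$ is equivalent to the desired conclusion $u_{n_j}\to u$.

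The engine is the combination $I_\eps(v_j)-\tfrac{1}{\mu}\langle I_\eps'(v_j),v_j\rangle$. By hypothesis $(f_3)$, the $f$-contribution to this combination is nonnegative, so passing to the limit yields
\begin{equation*}
\Bigl(\tfrac12-\tfrac1\mu\Bigr)\|v_j\|_\eps^2+\eps^{-2\alpha}\,\frac{2_\alpha^{*}-\mu}{\mu\,2_\alpha^{*}}\int_{\mathbb R^N}K(x)|v_j|^{2_\alpha^{*}}\,dx\leq c-I_\eps(u)+o(1).
\end{equation*}
At the same time, testing $\langle I_\eps'(v_j),v_j\rangle=o(1)$ gives
\begin{equation*}
\|v_j\|_\eps^2=\eps^{-2\alpha}\int_{\mathbb R^N}f(x,|v_j|)|v_j|^2\,dx+\eps^{-2\alpha}\int_{\mathbb R^N}K(x)|v_j|^{2_\alpha^{*}}\,dx+o(1).
\end{equation*}

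The crucial step is to dominate the subcritical integral by the critical one up to vanishing error, producing a constant $c_2>0$ depending only on $f$ such that
\begin{equation*}
\int_{\mathbb R^N}f(x,|v_j|)|v_j|^2\,dx\leq c_2\int_{\mathbb R^N}|v_j|^{2_\alpha^{*}}\,dx+o(1).
\end{equation*}
This I plan to extract by combining $(f_1)$--$(f_2)$ (which yield $|f(x,t)t^{2}|\leq\eta t^{2}+C_{\eta}t^{p}$ for arbitrary $\eta>0$) with the three-region decomposition $\mathbb R^N=B_r\cup(B_j\setminus B_r)\cup(\mathbb R^N\setminus B_j)$ already exploited in the proofs of Lemmas~\ref{vanish} and~\ref{Shift}: local compactness kills the $B_r$ contribution, the vanishing estimate~\eqref{4} together with the strong convergence $\widehat u_j\to u$ controls the annular $B_j\setminus B_r$ contribution, and on the far tail (where $v_j=u_{n_j}$) a H\"older trade with the uniformly bounded $L^{2_\alpha^{*}}$-norm absorbs the term into the critical integral. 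Feeding the resulting bound back into the second displayed identity, and invoking the magnetic Sobolev inequality $S_\alpha^\eps\|v_j\|_{L^{2_\alpha^{*}}}^{2}\leq[v_j]_{\alpha,A_\eps}^{2}\leq\|v_j\|_\eps^{2}$ together with $K(x)\leq K_1$, I obtain that if $b:=\lim_j\int|v_j|^{2_\alpha^{*}}>0$ then necessarily
\begin{equation*}
b\geq\eps^{N}\Bigl(\frac{S_\alpha^\eps}{c_2+K_1}\Bigr)^{2_\alpha^{*}/(2_\alpha^{*}-2)},
\end{equation*}
using $2\alpha\cdot 2_\alpha^{*}/(2_\alpha^{*}-2)=N$. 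Substituting this into the first displayed inequality, exploiting $K(x)\geq K_0$ and $I_\eps(u)\geq 0$, forces $c\geq C_0(\eps)\eps^{N-2\alpha}$, contradicting the standing hypothesis. Hence $b=0$, whence the second identity combined with the subcritical control yields $\|v_j\|_\eps\to 0$, i.e.\ $u_{n_j}\to u$ in $X_\eps$.

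The main obstacle is precisely the sub-by-super-critical comparison isolated above: one cannot simply invoke strong $L^p$ convergence on all of $\mathbb R^N$, because assumption $(V_2)$ does not immediately give compactness of $X_\eps\hookrightarrow L^p(\mathbb R^N)$. The careful use of Lemma~\ref{vanish} on the intermediate annulus $B_j\setminus B_r$, coupled with a H\"older estimate against the uniformly bounded $L^{2_\alpha^{*}}$-norm on the far tail, is what will quantify the constant $c_2$ and pin down the exact critical energy threshold $C_0(\eps)\eps^{N-2\alpha}$ appearing in the statement.
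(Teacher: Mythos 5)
Your overall skeleton is exactly the paper's: pass to $v_j=u^1_{n_j}$ via Lemmas~\ref{Stab}--\ref{Shift}, use $I_\eps(v_j)-\tfrac1\mu\langle I_\eps'(v_j),v_j\rangle$ with $(f_3)$, $(K)$ to bound $\limsup_j\int|v_j|^{2^*_\alpha}$ by a multiple of $c-I_\eps(u)$, then combine $\langle I_\eps'(v_j),v_j\rangle=o(1)$ with the Sobolev constant $S_\alpha^\eps$ to force the lower bound on $b$ (your exponent identity $2\alpha\,2^*_\alpha/(2^*_\alpha-2)=N$ and the final step $b=0\Rightarrow\int f(x,|v_j|)|v_j|^2\to0\Rightarrow\|v_j\|_\eps\to0$ are also as in the paper). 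The gap is precisely in the step you single out as crucial: the bound $\int_{\mathbb{R}^N} f(x,|v_j|)|v_j|^2\,dx\le c_2\int_{\mathbb{R}^N}|v_j|^{2^*_\alpha}\,dx+o(1)$ with a \emph{fixed} $c_2$. Your three-region plan does not deliver it. On the far tail $\mathbb{R}^N\setminus B_j$ the ``H\"older trade'' is unavailable: that region has infinite measure, so $\int_{\mathbb{R}^N\setminus B_j}|u_{n_j}|^2$ cannot be estimated by a power of the measure times the $L^{2^*_\alpha}$-norm; it is merely bounded. Any pointwise splitting $f(x,t)t^2\le\eta t^2+C_\eta t^{2^*_\alpha}$ (or via $t^p$ plus Young) then leaves you with $S_\alpha^\eps b^{2/2^*_\alpha}\le \eps^{-2\alpha}(C_\eta+K_1)\,b+C\eta\,\eps^{-2\alpha}$, where $C_\eta\to\infty$ as $\eta\to0$: you cannot let $\eta\to0$ while keeping the coefficient of $b$ fixed, and choosing $\eta$ in terms of $b$ makes $c_2$, hence $C_0(\eps)$, depend on the particular Palais--Smale sequence, which destroys the a priori threshold the lemma must provide (and which is later matched against the mountain-pass level). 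Lemma~\ref{vanish} cannot rescue this either: it only controls the annulus $B_j\setminus B_r$, not the $L^2$ mass of $u_{n_j}$ beyond $B_j$.

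The missing ingredient is the potential term together with $(V_2)$. The paper keeps $-\eps^{-2\alpha}\int V(x)|v_j|^2\,dx$ on the right-hand side after using $S_\alpha^\eps\|v_j\|_{L^{2^*_\alpha}}^2\le[v_j]^2_{\alpha,A_\eps}$, fixes $\lambda=a/2$ in $|f(x,t)|\le\lambda+C(\lambda)|t|^{2^*_\alpha-2}$, and writes $\lambda\int|v_j|^2-\int V|v_j|^2\le-\tfrac a2\int|v_j|^2+\int_{V^a}(a-V)|v_j|^2$; the last integral is over the finite-measure set $V^a$, where H\"older on $V^a\setminus B_{R_0}$ (small measure) and local compactness on $V^a\cap B_{R_0}$ make it vanish in the iterated limit. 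This yields the clean inequality $S_\alpha^\eps\big(\int|v_j|^{2^*_\alpha}\big)^{2/2^*_\alpha}\le(c_2+K_1)\eps^{-2\alpha}\int|v_j|^{2^*_\alpha}+o_j(1)$ with $c_2=C(a/2)$ fixed, after which your computation of the contradiction and the strong convergence goes through verbatim. So the architecture is sound, but you must replace the far-tail H\"older step by this $(V_2)$-based absorption of the $L^2$ term into the potential energy.
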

	\begin{proof}By the definition of $\{u_{n_j}^1\}_{j\in {\mathbb N}}$ and Lemma \ref{Stab}, it suffices to 
		have $u^1_{n_j}\rightarrow0$ in $X_\eps$ as $j\rightarrow\infty$. By means of conditions $(f_3)$ and $(K)$, we have
		\begin{eqnarray*}\label{9}
			\begin{split}
				&I_{\varepsilon}(u^{1}_{n_j})-\frac{1}{\mu}\langle I'_{\varepsilon}(u^{1}_{n_j}),u^{1}_{n_j}\rangle\\
				=&\left(\frac{1}{2}
				-\frac{1}{\mu}\right)[u^{1}_{n_j}]^2_{\alpha,A_{\varepsilon}}
				+\left(\frac{1}{2}
				-\frac{1}{\mu}\right)\varepsilon^{-2\alpha}\int_{\mathbb{R}^{N}}V(x)|u^{1}_{n_j}|^2\,dx\\
				&+\varepsilon^{-2\alpha}\int_{\mathbb{R}^{N}}\left(\frac{1}{\mu}f(x,|u^{1}_{n_j}|)|u^{1}_{n_j}|^2-F(x,|u^{1}_{n_j}|)\right)\,dx
				+\varepsilon^{-2\alpha}\left(\frac{1}{\mu}
				-\frac{1}{2^{*}_{\alpha}}\right)\int_{\mathbb{R}^{N}}K(x)|u^{1}_{n_j}|^{2^{*}_{\alpha}}\,dx\\
				\geq&\varepsilon^{-2\alpha}\left(\frac{1}{\mu}
				-\frac{1}{2^{*}_{\alpha}}\right)K_{0}\int_{\mathbb{R}^{N}}|u^{1}_{n_j}|^{2^{*}_{\alpha}}\,dx.
			\end{split}
		\end{eqnarray*}
		Then, from Lemma~\ref{Shift}, we get
		\begin{eqnarray}\label{15}
			\begin{split}
				\limsup_{j\rightarrow\infty}\int_{\mathbb{R}^{N}}|u^{1}_{n_j}|^{2^{*}_{\alpha}}\,dx\leq\frac{\mu 2^{*}_{\alpha}\varepsilon^{2\alpha}}{K_0(2^{*}_{\alpha}-\mu)}(c-I_{\varepsilon}(u)).
			\end{split}
		\end{eqnarray}
		Suppose that  $u^{1}_{n_j}\not\to 0$ in $L^{2^{*}_{\alpha}}(\mathbb{R}^{N},\mathbb{C})$. Then, we have
		\begin{equation}\label{14}
			\liminf_{j\rightarrow\infty}\|u^{1}_{n_j}\|_{L^{2^{*}_{\alpha}}}>0.
		\end{equation}
		Noting that $\langle I'_{\varepsilon}(u^{1}_{n_j}),u^{1}_{n_j}\rangle\rightarrow0$ as $j\rightarrow \infty$, we have
		\begin{eqnarray}\label{17}
			\begin{split}
				&[u^{1}_{n_j}]^2_{\alpha,A_{\varepsilon}}+\varepsilon^{-2\alpha}\int_{\mathbb{R}^{N}}V(x)|u^{1}_{n_j}|^{2}\,dx\\
				=&\varepsilon^{-2\alpha}\int_{\mathbb{R}^{N}}f(x,|u^{1}_{n_j}|)|u^{1}_{n_j}|^2\,dx
				+\varepsilon^{-2\alpha}\int_{\mathbb{R}^{N}}K(x)|u^{1}_{n_j}|^{2^{*}_{\alpha}}\,dx+o_j(1).
			\end{split}
		\end{eqnarray}
		It follows from (\ref{5}) that
		\begin{eqnarray*}
			\begin{split}
				&S_{\alpha}^\eps\left(\int_{\mathbb{R}^{N}}|u^{1}_{n_j}|^{2^{*}_{\alpha}}\,dx\right)^{\frac{2}{2^{*}_{\alpha}}}
				\leq[u^{1}_{n_j}]^2_{\alpha,A_{\varepsilon}}\\
				=&\varepsilon^{-2\alpha}\int_{\mathbb{R}^{N}}f(x,|u^{1}_{n_j}|)|u^{1}_{n_j}|^2\,dx
				+\varepsilon^{-2\alpha}\int_{\mathbb{R}^{N}}K(x)|u^{1}_{n_j}|^{2^{*}_{\alpha}}\,dx
				-\varepsilon^{-2\alpha}\int_{\mathbb{R}^{N}}V(x)|u^{1}_{n_j}|^{2}\,dx+o_j(1).
			\end{split}
		\end{eqnarray*}
		By $(f_1)$ and $(f_2)$, for any $\lambda>0$, there exists $C(\lambda)>0$ such that
		\begin{equation}\label{16}
			|f(x,t)|\leq\lambda+C(\lambda)|t|^{2^{*}_{\alpha}-2}.
		\end{equation}
		Thus
		\begin{eqnarray}\label{10}
			\begin{split}
				S_{\alpha}^\eps\left(\int_{\mathbb{R}^{N}}|u^{1}_{n_j}|^{2^{*}_{\alpha}}\,dx\right)^{\frac{2}{2^{*}_{\alpha}}}
				\leq&\lambda\varepsilon^{-2\alpha}\int_{\mathbb{R}^{N}}|u^{1}_{n_j}|^{2}\,dx
				+C(\lambda)\varepsilon^{-2\alpha}\int_{\mathbb{R}^{N}}|u^{1}_{n_j}|^{2^{*}_{\alpha}}\,dx\\
				&+\varepsilon^{-2\alpha}K_1\int_{\mathbb{R}^{N}}|u^{1}_{n_j}|^{2^{*}_{\alpha}}\,dx
				-\varepsilon^{-2\alpha}\int_{\mathbb{R}^{N}}a|u^{1}_{n_j}|^{2}\,dx\\
				&+\varepsilon^{-2\alpha}\int_{\mathbb{R}^{N}}a|u^{1}_{n_j}|^{2}\,dx
				-\varepsilon^{-2\alpha}\int_{\mathbb{R}^{N}}V(x)|u^{1}_{n_j}|^{2}\,dx+o_j(1).
			\end{split}
		\end{eqnarray}
		Since $V^{a}$ has finite Lebesgue measure, we obtain $|V^{a}\setminus B_R|\to 0$ for $R\to\infty$.
		Then, for any $\eta>0$, there exists $R_0>0$ such that $|V^{a}\setminus B_R|<\eta$ for any $R\geq R_0$.
		We have
		\begin{align*}
				\int_{\mathbb{R}^{N}}(a-V(x))|u^{1}_{n_j}|^{2}\,dx
				\leq&\int_{V^a}(a-V(x))|u^{1}_{n_j}|^{2}\,dx\\
				=&\int_{V^a\setminus B_{R_0}}(a-V(x))|u^{1}_{n_j}|^{2}\,dx
				+\int_{V^a \cap B_{R_0}}(a-V(x))|u^{1}_{n_j}|^{2}\,dx.
			\end{align*}
		Now the H\"{o}lder inequality gives
		\begin{align*}
				\int_{V^a\setminus B_{R_0}}(a-V(x))|u^{1}_{n_j}|^{2}\,dx
				\leq&\int_{V^a\setminus B_{R_0}}a|u^{1}_{n_j}|^{2}\,dx\\
				\leq&a\|u^{1}_{n_j}\|^2_{L^{2^*_{\alpha}}}|V^a\setminus B_{R_0}|^{1-\frac{2}{2^*_{\alpha}}}
				\leq C\eta^{1-\frac{2}{2^*_{\alpha}}}.
			\end{align*}
		As $u^{1}_{n_j}\rightarrow0$ weakly in $X$, $u^{1}_{n_j}\rightarrow0$ in $L^2(B_{R_0},\mathbb{C})$, as $j\rightarrow\infty$. Then, for the above $\eta>0$,
		there exists $j_{0}\in\mathbb{N}$ such that for any $j\geq j_0$,
		\begin{equation}\label{13}
			\int_{V^a \cap B_{R_0}}(a-V(x))|u^{1}_{n_j}|^{2}\,dx\leq a\int_{B_{R_0}}|u^{1}_{n_j}|^{2}\,dx\leq a\eta.
		\end{equation}
		Let $\lambda=a/2$. In terms of \eqref{10}-\eqref{13}, there exists $c_2>0$ depending on $f$ such that
		\begin{eqnarray*}
			\begin{split}
				S_{\alpha}^\eps\left(\int_{\mathbb{R}^{N}}|u^{1}_{n_j}|^{2^{*}_{\alpha}}\,dx\right)^{\frac{2}{2^{*}_{\alpha}}}
				\leq(c_2+K_1)\varepsilon^{-2\alpha}\int_{\mathbb{R}^{N}}|u^{1}_{n_j}|^{2^{*}_{\alpha}}\,dx+
				C\varepsilon^{-2\alpha}\eta^{1-\frac{2}{2^{*}_{\alpha}}}+\varepsilon^{-2\alpha} a\eta+o_j(1).
			\end{split}
		\end{eqnarray*}
		Letting $\eta\rightarrow0$,
		we have
		$$
		S_{\alpha}^\eps\left(\int_{\mathbb{R}^{N}}|u^{1}_{n_j}|^{2^{*}_{\alpha}}\,dx\right)^{\frac{2}{2^{*}_{\alpha}}}
		\leq(c_2+K_1)\varepsilon^{-2\alpha}\int_{\mathbb{R}^{N}}|u^{1}_{n_j}|^{2^{*}_{\alpha}}\,dx+o_j(1).
		$$
		From (\ref{15}) and (\ref{14}), we get
		$$
		S_{\alpha}^\eps\leq(c_2+K_1)\varepsilon^{-2\alpha}\left(\frac{\mu 2^{*}_{\alpha}\varepsilon^{2\alpha}}{K_0(2^{*}_{\alpha}-\mu)}(c-I_{\varepsilon}(u))\right)^{1-\frac{2}{2^{*}_{\alpha}}}.
		$$
		Then
		$C_{0}(\eps)\varepsilon^{N-2\alpha}\leq c-I_{\varepsilon}(u)\leq c.$
		If $c<C_{0}(\eps)\varepsilon^{N-2\alpha}$, we get a contradiction, which implies
		$$
		u^{1}_{n_j}\rightarrow 0\quad \textup{in $L^{2^{*}_{\alpha}}(\mathbb{R}^{N},\mathbb{C})$}.
		$$
		It follows from \eqref{16} that
		$$
		\left|\int_{\mathbb{R}^{N}}f(x,|u^{1}_{n_j}|)|u^{1}_{n_j}|^2\,dx\right|
		\leq\int_{\mathbb{R}^{N}}(\lambda|u^{1}_{n_j}|^2+C(\lambda)|u^{1}_{n_j}|^{2^{*}_{\alpha}})\,dx.
		$$
		As $\{u^{1}_{n_j}\}_{j\in {\mathbb N}}$ is bounded in $L^{2}(\mathbb{R}^{N})$, we have
		$$\limsup_{j\rightarrow\infty}\int_{\mathbb{R}^{N}}f(x,|u^{1}_{n_j}|)|u^{1}_{n_j}|^2\,dx
		=\limsup_{\lambda\rightarrow0}\limsup_{j\rightarrow\infty}\int_{\mathbb{R}^{N}}f(x,|u^{1}_{n_j}|)|u^{1}_{n_j}|^2\,dx=0.$$
		By \eqref{17}, it follows that $u^{1}_{n_j}\rightarrow 0$ in $X_\eps$ as $j\to\infty$.
	\end{proof}
	
	\noindent
	Next we provide a result to show that $I_\varepsilon$ has a Mountain Pass geometry.
	
	\begin{lemma}[Mountain Pass geometry I]
		\label{negat} For any $\varepsilon>0$ and $\delta>0$, there exist $t_0=t_0(\eps,\delta)>0$ and  $\psi_{\varepsilon,\delta}\in X_\eps$ such that $I_{\varepsilon}(t_0\psi_{\varepsilon,\delta})<0$.
	\end{lemma}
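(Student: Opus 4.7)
The plan is to exploit the super-quadratic (indeed, super-critical) nature of the subtracted term $\frac{\varepsilon^{-2\alpha}}{2_\alpha^*}\int K(x)|u|^{2_\alpha^*}\,dx$ against the quadratic kinetic and potential pieces. Since the dependence on $\delta$ in the statement is presumably used later to match a mountain-pass radius, the essential claim is that for one convenient test function the functional becomes negative far enough along the ray through it.

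Concretely, I would first fix any $\psi\in C_0^\infty(\mathbb{R}^N,\mathbb{C})\setminus\{0\}$ and set $\psi_{\varepsilon,\delta}:=\psi$ (rescaling by a constant $\lambda_\delta>0$ only if one wants to enforce an auxiliary condition like $\|\psi_{\varepsilon,\delta}\|_\varepsilon\geq\delta$). Since $A\in C(\mathbb{R}^N,\mathbb{R}^N)$ and $V$ is continuous, $\psi$ is smooth with compact support so one verifies directly that $[\psi]_{\alpha,A_\varepsilon}<\infty$ and $\int_{\mathbb{R}^N} V(x)|\psi|^2\,dx<\infty$; hence $\psi\in X_\varepsilon$.

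Next, I would evaluate along the ray $t\mapsto t\psi$, $t>0$:
\begin{equation*}
I_\varepsilon(t\psi)=\frac{t^2}{2}[\psi]_{\alpha,A_\varepsilon}^2+\frac{t^2\varepsilon^{-2\alpha}}{2}\int_{\mathbb{R}^N}V(x)|\psi|^2\,dx-\varepsilon^{-2\alpha}\int_{\mathbb{R}^N}F(x,t|\psi|)\,dx-\frac{t^{2_\alpha^*}\varepsilon^{-2\alpha}}{2_\alpha^*}\int_{\mathbb{R}^N}K(x)|\psi|^{2_\alpha^*}\,dx.
\end{equation*}
Condition $(f_4)$ gives $f(x,s)\geq c_1 s^{q-2}>0$ for $s>0$, hence $F(x,t)=\int_0^t f(x,s)s\,ds\geq 0$ for all $t\geq 0$; thus this term only makes the right-hand side more negative and may be dropped. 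Using also $K(x)\geq K_0>0$ from $(K)$, I obtain the upper bound
\begin{equation*}
I_\varepsilon(t\psi)\leq A_\varepsilon\,t^2-B_\varepsilon\,t^{2_\alpha^*},\qquad A_\varepsilon:=\frac{1}{2}[\psi]_{\alpha,A_\varepsilon}^2+\frac{\varepsilon^{-2\alpha}}{2}\int_{\mathbb{R}^N}V(x)|\psi|^2\,dx,\quad B_\varepsilon:=\frac{\varepsilon^{-2\alpha}K_0}{2_\alpha^*}\int_{\mathbb{R}^N}|\psi|^{2_\alpha^*}\,dx>0.
\end{equation*}
Since $2_\alpha^*>2$, the right-hand side tends to $-\infty$ as $t\to\infty$, so one can pick $t_0=t_0(\varepsilon,\delta)>0$ large enough that $I_\varepsilon(t_0\psi_{\varepsilon,\delta})<0$ and, simultaneously, $\|t_0\psi_{\varepsilon,\delta}\|_\varepsilon$ exceeds any prescribed threshold involving $\delta$.

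There is no real obstacle here: the statement is just the standard ``negative-far-from-the-origin'' half of the mountain-pass geometry, and it follows purely from the super-quadratic exponent $2_\alpha^*>2$ together with the positivity assumptions $(K)$ and (via $(f_4)$) $F\geq 0$. The only minor technicality is verifying $C_0^\infty(\mathbb{R}^N,\mathbb{C})\subset X_\varepsilon$, which is immediate from the continuity of $A$ and $V$ and the local $H^\alpha$-regularity of smooth compactly supported functions.
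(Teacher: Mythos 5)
Your argument is correct as a verification of the lemma's literal statement, but it departs from the paper's proof in a way that matters for the rest of the argument. The paper constructs $\psi_{\varepsilon,\delta}$ very concretely: it first shows, via the dilation $\phi\mapsto\delta^{N/q}\phi(\delta\,\cdot)$ (which fixes $\|\cdot\|_{L^q}$ and multiplies the Gagliardo seminorm by $\delta^{(2N-(N-2\alpha)q)/q}$, a factor tending to $0$ since $q<2_\alpha^*$), that there is $\phi_\delta\in C_0^\infty$ with $\|\phi_\delta\|_{L^q}=1$ and $[\phi_\delta]_{\alpha,0}^2\leq C\delta^{(2N-(N-2\alpha)q)/q}$; it then sets $\psi_\delta=e^{\im A(0)\cdot x}\phi_\delta$ and $\psi_{\varepsilon,\delta}=\psi_\delta(\varepsilon^{-1}\cdot)$, and drives the energy negative using the subcritical lower bound $F(x,t)\geq c_1 t^q/q$ from $(f_4)$ rather than the critical term, factoring out $\varepsilon^{N-2\alpha}$ by the change of variables $x\mapsto\varepsilon x$. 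You instead take an arbitrary fixed $\psi\in C_0^\infty$ and exploit $2_\alpha^*>2$ to send $I_\varepsilon(t\psi)\to-\infty$; your auxiliary observations ($F\geq 0$ from $(f_4)$, $K\geq K_0>0$, $C_0^\infty\subset X_\varepsilon$) are all correct, and this does prove the existence asserted in the lemma.

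The difference is not cosmetic, though, and your guess about what $\delta$ is for is off. The parameter $\delta$ is not there to match a mountain-pass radius or to normalize $\|\psi_{\varepsilon,\delta}\|_\varepsilon$; it is the dilation parameter that makes $[\phi_\delta]_{\alpha,0}$ small, and this smallness is precisely what feeds Lemma~\ref{norm-est} and the estimate $\max_{t\geq0}I_\varepsilon(t\psi_{\varepsilon,\delta})<C_0\varepsilon^{N-2\alpha}$ culminating in \eqref{20}, i.e.\ what puts the mountain-pass level below the Palais-Smale threshold of Proposition~\ref{Const-lim}. With a fixed test function as in your construction, one loses the $\delta$-controlled bound on the seminorm and the level estimate does not close. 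Similarly, the paper's use of the $t^q$ term (with $2<q<2_\alpha^*$) instead of the critical $t^{2_\alpha^*}$ term is tied to the same level comparison, where only the subcritical, $(f_4)$-controlled term is tracked. In short: your proof is a valid proof of the isolated statement, but the paper's proof is really a construction whose output ($\phi_\delta$, $\psi_\delta$, $\psi_{\varepsilon,\delta}$) is reused in Lemma~\ref{norm-est} and Section~4, and your version does not produce those objects.
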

	\begin{proof}
		
		We first verify that
		\begin{equation*}
			\inf\left\{\int_{\mathbb{R}^{N}}\frac{|\phi(x)-\phi(y)|^2}{|x-y|^{N+2\alpha}}\,dxdy:\phi\in C_0^\infty(\mathbb{R}^{N})\ \textup{with} \ \|\phi\|_{L^q(\mathbb{R}^{N})}=1\right\}=0.
		\end{equation*}
		Let $\phi\in C_0^\infty(\mathbb{R}^{N})$ with $\|\phi\|_{L^q(\mathbb{R}^{N})}=1$ and ${\rm supp}\,\phi\subset B_{r_0}$, where $r_0>0$. Then we have
		$$\int_{\mathbb{R}^{N}}|\delta^{\frac{N}{q}}\phi(\delta x)|^q\,dx=1$$
		and, as $\delta\rightarrow0$,
		$$
		\int_{\mathbb{R}^{2N}}\frac{|\delta^{\frac{N}{q}}\phi(\delta x)-\delta^{\frac{N}{q}}\phi(\delta y)|^2}{|x-y|^{N+2\alpha}}\,dxdy
		=\delta^{\frac{2N-(N-2\alpha)q}{q}}\int_{\mathbb{R}^{2N}}\frac{|\phi(x)-\phi(y)|^2}{|x-y|^{N+2\alpha}}\,dxdy\rightarrow0.
		$$
		Hence, for any $\delta>0$, there exist $r_{\delta}>0$ and   $\phi_{\delta}\in C_0^{\infty}(\mathbb{R}^{N})$ with $\|\phi_{\delta}\|_{L^q(\mathbb{R}^{N})}=1$  and ${\rm supp}\phi_{\delta}\subset B_{r_\delta}$
		such that
		\begin{equation*}
			\int_{\mathbb{R}^{2N}}\frac{|\phi_{\delta}(x)-\phi_{\delta}(y)|^2}{|x-y|^{N+2\alpha}}\,dxdy\leq C\delta^{\frac{2N-(N-2\alpha)q}{q}}.
		\end{equation*}
		Let $\psi_{\delta}(x):=e^{\im A(0)\cdot x}\phi_{\delta}(x)$ and  $\psi_{\varepsilon,\delta}(x):=\psi_{\delta}(\varepsilon^{-1} x)$.
		By $(f_4)$, for any $t>0$ we get
		\begin{eqnarray*}
			\begin{split}
				I_{\varepsilon}(t\psi_{\varepsilon,\delta})
				\leq&\frac{t^2}{2}[\psi_{\varepsilon,\delta}]_{\alpha,A_{\varepsilon}}^2
				+t^2\frac{\varepsilon^{-2\alpha}}{2}\int_{\mathbb{R}^{N}}V(x)|\psi_{\varepsilon,\delta}(x)|^2\,dx-t^q\frac{c_1}{q}\varepsilon^{-2\alpha}
				\int_{\mathbb{R}^{N}}|\psi_{\varepsilon,\delta}(x)|^{q}\,dx\\
				=&\varepsilon^{N-2\alpha}\bigg\{\frac{t^2}{2}\int_{\mathbb{R}^{2N}}\frac{|\psi_{\delta}(x)-e^{\im(x-y)\cdot A(\frac{\varepsilon x+\varepsilon y}{2})}\psi_{\delta}(y)|^2}{|x-y|^{N+2\alpha}}\,dxdy
				+\frac{t^2}{2}\int_{\mathbb{R}^{N}}V(\varepsilon x)|\psi_{\delta}(x)|^2\,dx\\
				&-t^q\frac{c_1}{q}\int_{\mathbb{R}^{N}}|\psi_{\delta}(x)|^{q}\,dx\bigg\}
				=: \varepsilon^{N-2\alpha}J_{\varepsilon}(t\psi_{\delta}).
			\end{split}
		\end{eqnarray*}
		Now it is easy to see that assumption $q>2$ implies there exists $t_0>0$ such that
		$$I_{\varepsilon}(t_0\psi_{\varepsilon,\delta})\leq \varepsilon^{N-2\alpha}J_{\varepsilon}(t_0\psi_{\delta})<0.$$
		This finishes the proof.
	\end{proof}
	
	\noindent
	Let $\psi_{\delta}(x)=e^{\im A(0)\cdot x}\phi_{\delta}(x)$, where $\phi_\delta$ is
	as in the proof of Lemma~\ref{negat}. Then, we have the following

	\begin{lemma}[Norm estimate]
		\label{norm-est}
	 For any $\delta>0$ there exists $\varepsilon_0=\eps_0(\delta)>0$ such that
	$$
	\int_{\mathbb{R}^{2N}}\frac{|\psi_{\delta}(x)-e^{\im(x-y)\cdot A(\frac{\varepsilon x+\varepsilon y}{2})}\psi_{\delta}(y)|^2}{|x-y|^{N+2\alpha}}\,dxdy\leq C\delta^{\frac{2N-(N-2\alpha)q}{q}}+\frac{1}{1-\alpha}\delta^{2\alpha}+\frac{4}{\alpha}\delta^{2\alpha},
	$$
	for all $0<\eps<\eps_0$, for come constant $C>0$ depending only on $[\phi]_{\alpha,0}$.
\end{lemma}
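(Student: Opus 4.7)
My plan is to exploit the factorization $\psi_{\delta}(x)=e^{\im A(0)\cdot x}\phi_{\delta}(x)$ to reduce the problem to estimating a magnetic-like Gagliardo integrand whose ``effective magnetic field'' $A(\eps\,\cdot)-A(0)$ shrinks to zero with $\eps$. Pulling out the outer phase and setting $\tilde\theta(x,y):=(x-y)\cdot[A(\eps(x+y)/2)-A(0)]$ gives
\[
|\psi_{\delta}(x)-e^{\im(x-y)\cdot A(\eps(x+y)/2)}\psi_{\delta}(y)|^{2}=|\phi_{\delta}(x)-e^{\im\tilde\theta(x,y)}\phi_{\delta}(y)|^{2},
\]
and continuity of $A$ at $0$, together with ${\rm supp}\,\phi_{\delta}\subset B_{r_{\delta}}$, forces $|A(\eps(x+y)/2)-A(0)|$ to be as small as desired on the relevant range once $\eps<\eps_{0}(\delta)$.

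Next I would bound the integrand pointwise via
\[
|\phi_{\delta}(x)-e^{\im\tilde\theta}\phi_{\delta}(y)|^{2}\leq 2|\phi_{\delta}(x)-\phi_{\delta}(y)|^{2}+2|\phi_{\delta}(y)|^{2}\,|1-e^{\im\tilde\theta}|^{2},
\]
which follows from $\phi_{\delta}(x)-e^{\im\tilde\theta}\phi_{\delta}(y)=[\phi_{\delta}(x)-\phi_{\delta}(y)]+(1-e^{\im\tilde\theta})\phi_{\delta}(y)$ and $(a+b)^2\le 2a^2+2b^2$. The first summand integrates to $2[\phi_{\delta}]_{\alpha,0}^{2}$, which Lemma~\ref{negat} already controls by $C\delta^{(2N-(N-2\alpha)q)/q}$ with $C=C([\phi]_{\alpha,0})$. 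For the second summand I would apply Fubini, substitute $z=x-y$, and split the $z$-integral at $|z|=1$. On $\{|z|\le 1\}$, the bound $|1-e^{\im\tilde\theta}|\le|\tilde\theta|\le|z|\,|A(\eps\,\cdot)-A(0)|$ combined with $\int_{0}^{1}r^{1-2\alpha}\,dr=\frac{1}{2(1-\alpha)}$ accounts for the $\frac{1}{1-\alpha}$ prefactor; on $\{|z|>1\}$ the crude bound $|1-e^{\im\tilde\theta}|\le 2$ combined with $\int_{1}^{\infty}r^{-1-2\alpha}\,dr=\frac{1}{2\alpha}$ accounts for the $\frac{4}{\alpha}$ prefactor.

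The main obstacle is producing the explicit $\delta^{2\alpha}$ factors attached to these two prefactors. The scaled profile $\phi_{\delta}(x)=\delta^{N/q}\phi(\delta x)$ satisfies $\|\phi_{\delta}\|_{L^{2}}^{2}\sim\delta^{-N(q-2)/q}$, which diverges as $\delta\to 0$ for $q\in(2,2_{\alpha}^{*})$, so a naive use of $\int|\phi_{\delta}(y)|^{2}\,dy$ over-counts by this factor. Since $A$ is only assumed continuous, this has to be absorbed through a careful calibration of $\eps_{0}(\delta)$: on the near region one uses the modulus of continuity of $A$ at $0$ to shrink $|A(\eps\,\cdot)-A(0)|^{2}\|\phi_{\delta}\|_{L^{2}}^{2}$ below $\delta^{2\alpha}$, while on the far region one exploits the support cap $|x-y|\le 2r_{\delta}=2r_{0}/\delta$ to let the $|x-y|^{-N-2\alpha}$ decay compensate the $L^{2}$ growth of $\phi_{\delta}$. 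Once such an $\eps_{0}(\delta)$ is fixed, assembling the three contributions gives the claimed bound.
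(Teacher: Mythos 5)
Your setup matches the paper's: factor out the phase $e^{\im A(0)\cdot x}$, split via $(a+b)^2\le 2a^2+2b^2$, control the first summand by $2[\phi_\delta]^2_{\alpha,0}\le C\delta^{(2N-(N-2\alpha)q)/q}$, and split the remaining double integral into a near region (where $|1-e^{\im\tilde\theta}|\le|\tilde\theta|$) and a far region (where $|1-e^{\im\tilde\theta}|\le 2$). Where the argument breaks is your choice of split radius $|z|=1$, and the subsequent repair you sketch does not close the gap.

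The far-field term is, after Fubini,
\[
2\int_{B_{r_\delta}}|\phi_\delta(y)|^2\,dy\int_{|z|>1}\frac{4}{|z|^{N+2\alpha}}\,dz
\ \simeq\ \frac{4}{\alpha}\,\|\phi_\delta\|^2_{L^2},
\]
and, as you note, $\|\phi_\delta\|^2_{L^2}\sim\delta^{N(2-q)/q}\to\infty$ as $\delta\to0$. Neither of the two fixes you offer applies here. Shrinking $\eps_0(\delta)$ only affects the near field, because the crude bound $|1-e^{\im\tilde\theta}|\le 2$ used in the far field is $\eps$-independent. And there is no ``support cap $|x-y|\le 2r_\delta$'' available: in the second summand of the pointwise split only $\phi_\delta(y)$ (equivalently only $\phi_\delta(x)$, if you rearrange the decomposition) carries the support restriction, so the free variable still ranges over all of $\mathbb{R}^N$ and $|x-y|$ is unbounded; even if such a cap held, $\int_1^{2r_\delta}r^{-1-2\alpha}dr\to\frac{1}{2\alpha}$ as $r_\delta\to\infty$, so it would not help.

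The missing idea is that the split radius must itself be a $\delta$-dependent quantity calibrated against $\|\phi_\delta\|_{L^2}$. The paper takes $M_{\delta,y}=\{|x-y|\le\frac{1}{\delta}\|\phi_\delta\|^{1/\alpha}_{L^2}\}$, so the far field yields
\[
\|\phi_\delta\|^2_{L^2}\cdot\frac{4}{2\alpha}\Big(\tfrac{1}{\delta}\|\phi_\delta\|^{1/\alpha}_{L^2}\Big)^{-2\alpha}
=\frac{4}{2\alpha}\,\delta^{2\alpha},
\]
with the $\|\phi_\delta\|_{L^2}$ factors cancelling \emph{exactly}. Simultaneously, $\eps_0(\delta)$ is chosen (via continuity of $A$ at $0$, on the bounded set $|y|\le r_\delta$, $|x-y|\le\frac{1}{\delta}\|\phi_\delta\|^{1/\alpha}_{L^2}$) so that $|A(0)-A(\tfrac{\eps x+\eps y}{2})|\le\delta\|\phi_\delta\|^{-1/\alpha}_{L^2}$; then the near field gives $\frac{1}{2-2\alpha}\delta^{2\alpha}$ after the same exact cancellation. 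Both the split radius and the required modulus-of-continuity threshold for $A$ must be chosen together and with this precise $\|\phi_\delta\|_{L^2}$-dependence; fixing the split at $|z|=1$ forfeits this and cannot be recovered afterward.
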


\begin{proof}
 For any $\delta>0$, we have	
	\begin{align*}
			&\int_{\mathbb{R}^{2N}}\frac{|\psi_{\delta}(x)-e^{\im(x-y)\cdot A(\frac{\varepsilon x+\varepsilon y}{2})}\psi_{\delta}(y)|^2}{|x-y|^{N+2\alpha}}\,dxdy\\
			=&\int_{\mathbb{R}^{2N}}\frac{|e^{\im A(0)\cdot x}\phi_{\delta}(x)-e^{\im(x-y)\cdot A(\frac{\varepsilon x+\varepsilon y}{2})}e^{\im A(0)\cdot y}\phi_{\delta}(y)|^2}{|x-y|^{N+2\alpha}}\,dxdy\\
			\leq&2\int_{\mathbb{R}^{2N}}\frac{|\phi_{\delta}(x)-\phi_{\delta}(y)|^2}{|x-y|^{N+2\alpha}}\,dxdy+2\int_{\mathbb{R}^{2N}}
			\frac{|\phi_{\delta}(y)|^2|e^{\im(x-y)\cdot(A(0)-A(\frac{\varepsilon x+\varepsilon y}{2}))}-1|^2}{|x-y|^{N+2\alpha}}\,dxdy.
	\end{align*}
	Next we will  estimate  the second term in the above inequality.
	Notice that
	\begin{equation}
	\label{seno}
		\left|e^{\im(x-y)\cdot(A(0)-A(\frac{\varepsilon x+\varepsilon y}{2}))}-1\right|^2=4\sin^2\left[\frac{(x-y)\cdot(A(0)-A(\frac{\varepsilon x+\varepsilon y}{2}))}{2}\right].
	\end{equation}
	For any $y\in  B_{r_\delta}$, if $|x-y|\leq\frac{1}{\delta}\|\phi_{\delta}\|_{L^2}^{\frac{1}{\alpha}}$, then $|x|\leq r_\delta+\frac{1}{\delta}\|\phi_{\delta}\|_{L^2}^{\frac{1}{\alpha}}$. Hence, we have
	$$
	\left|\frac{\varepsilon x+\varepsilon y}{2}\right|\leq\frac{\varepsilon}{2}\left(2r_\delta+\frac{1}{\delta}\|\phi_{\delta}\|_{L^2}^{\frac{1}{\alpha}}\right).
	$$
	Since $A:\mathbb{R}^{N}\rightarrow\mathbb{R}^{N}$ is continuous, 
	there exists $\varepsilon_0>0$ such that for any $0<\varepsilon<\varepsilon_0$,
	$$
	\left|A(0)-A\left(\frac{\varepsilon x+\varepsilon y}{2}\right)\right|\leq\delta\|\phi_{\delta}\|_{L^2}^{-\frac{1}{\alpha}},
	\quad\text{for $|y|\leq r_\delta$ and $|x|\leq r_\delta+\frac{1}{\delta}\|\phi_{\delta}\|_{L^2}^{\frac{1}{\alpha}}$}.
	$$
	which implies
	$$
	\left|e^{\im(x-y)\cdot (A(0)-A(\frac{\varepsilon x+\varepsilon y}{2}))}-1\right|^2\leq|x-y|^2\delta^2\|\phi_{\delta}\|_{L^2}^{-\frac{2}{\alpha}}.
	$$
	For all $\delta>0$ and $y\in  B_{r_\delta}$, let us define
	$$
	M_{\delta,y}:=\left\{x\in\mathbb{R}^N:|x-y|\leq \frac{1}{\delta}\|\phi_{\delta}\|_{L^2}^{\frac{1}{\alpha}}\right\}.
	$$
	Then gathering the above facts, for all $0<\eps<\eps_0$, we have
	\begin{equation*}
		\begin{split}
			&\int_{\mathbb{R}^{2N}}\frac{|\phi_{\delta}(y)|^2|e^{\im(x-y)\cdot (A(0)-A(\frac{\varepsilon x+\varepsilon y}{2}))}-1|^2}{|x-y|^{N+2\alpha}}\,dxdy\\
			=&\Big(\int_{ B_{r_\delta}}|\phi_{\delta}(y)|^2\,dy
			\int_{M_{\delta,y}}+\int_{ B_{r_\delta}}|\phi_{\delta}(y)|^2\,dy\int_{\mathbb{R}^{N}\setminus M_{\delta,y}}\Big)\frac{|e^{\im(x-y)(A(0)-A(\frac{\varepsilon x+\varepsilon y}{2}))}-1|^2}{|x-y|^{N+2\alpha}}\,dx\\
			\leq&\int_{ B_{r_\delta}}|\phi_{\delta}(y)|^2\,dy\int_{M_{\delta,y}}
			\frac{|x-y|^2}{|x-y|^{N+2\alpha}}\delta^2\|\phi_{\delta}\|_{L^2}^{-\frac{2}{\alpha}}\,dx+\int_{ B_{r_\delta}}|\phi_{\delta}(y)|^2\,dy\int_{\mathbb{R}^{N}\setminus M_{\delta,y}}
			\frac{4}{|x-y|^{N+2\alpha}}\,dx\\
			\leq&\frac{1}{2-2\alpha}\delta^{2\alpha}+\frac{4}{2\alpha}\delta^{2\alpha}.
		\end{split}
	\end{equation*}
Combining the previous inequalities concludes the proof.
\end{proof}

\noindent
Let $t_0=t_0(\eps,\delta)>0$ and $\psi_{\varepsilon,\delta}$ of Lemma~\ref{negat}. Then, we have the following 
	
	\begin{lemma}[Mountain Pass geometry II]
		For any $\varepsilon>0$ and $\delta>0$, there exist
		$$
		d_{\varepsilon,\delta}>0\quad\,\,\text{and}\quad\,\, 0<\rho_{\varepsilon,\delta}<\|t_0\psi_{\varepsilon,\delta}\|_{\varepsilon},
		$$
		with $I_{\varepsilon}(u)\geq d_{\varepsilon,\delta}$ for $u\in X_\eps$ with $\|u\|_{\varepsilon}=\rho_{\varepsilon,\delta}$ and $I_{\varepsilon}(u)>0$ 
		for any $u\in X_\eps\setminus\{0\}$ with $\|u\|_{\varepsilon}<\rho_{\varepsilon,\delta}$. 
	\end{lemma}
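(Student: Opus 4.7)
My plan is to apply the standard super-quadratic argument near the origin: combine a pointwise bound on $F$ coming from $(f_1)$--$(f_2)$, the upper bound $K\leq K_{1}$ from $(K)$, and the Sobolev embeddings established in Propositions~\ref{th2.1}--\ref{th2.2} to produce an inequality of the shape
$$
I_{\varepsilon}(u)\geq a\,\|u\|_{\varepsilon}^{2}-b\,\|u\|_{\varepsilon}^{p}-c\,\|u\|_{\varepsilon}^{2_{\alpha}^{*}},
$$
with $a,b,c>0$ depending on $\varepsilon$. Since $p,2_{\alpha}^{*}>2$, the conclusion is then immediate from the standard behaviour of a super-quadratic polynomial near $0$.

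More precisely, I would first observe that by $(f_1)$ and $(f_2)$, for every $\lambda>0$ there exists $C(\lambda)>0$ with $|f(x,t)t|\leq \lambda t + C(\lambda) t^{p-1}$ for all $t\geq 0$ and $x\in\mathbb{R}^{N}$. Integrating in $t$ yields
$$
F(x,t)\leq \tfrac{\lambda}{2}t^{2}+\tfrac{C(\lambda)}{p}|t|^{p}.
$$
Together with $K(x)\leq K_{1}$, this gives
$$
I_{\varepsilon}(u)\geq \tfrac{1}{2}\|u\|_{\varepsilon}^{2}-\tfrac{\lambda\varepsilon^{-2\alpha}}{2}\|u\|_{L^{2}}^{2}-\tfrac{C(\lambda)\varepsilon^{-2\alpha}}{p}\|u\|_{L^{p}}^{p}-\tfrac{K_{1}\varepsilon^{-2\alpha}}{2_{\alpha}^{*}}\|u\|_{L^{2_{\alpha}^{*}}}^{2_{\alpha}^{*}}.
$$
By Propositions~\ref{th2.1}--\ref{th2.2}, each $L^{\nu}$ norm for $\nu\in[2,2_{\alpha}^{*}]$ is controlled by $\|u\|_{\varepsilon}$ through an $\varepsilon$-dependent constant $C_{\nu}$. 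Choosing $\lambda=\lambda(\varepsilon)$ so small that $\lambda\varepsilon^{-2\alpha}C_{2}^{2}\leq\tfrac{1}{2}$ absorbs the $L^{2}$ contribution into the quadratic part, producing
$$
I_{\varepsilon}(u)\geq \tfrac{1}{4}\|u\|_{\varepsilon}^{2}-\tilde{C}_{1}(\varepsilon)\|u\|_{\varepsilon}^{p}-\tilde{C}_{2}(\varepsilon)\|u\|_{\varepsilon}^{2_{\alpha}^{*}}.
$$

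Since $p,2_{\alpha}^{*}>2$, I can then pick $\rho_{0}=\rho_{0}(\varepsilon)>0$ so small that $\tilde{C}_{1}\rho_{0}^{p-2}+\tilde{C}_{2}\rho_{0}^{2_{\alpha}^{*}-2}\leq\tfrac{1}{8}$, which yields $I_{\varepsilon}(u)\geq\tfrac{1}{8}\|u\|_{\varepsilon}^{2}>0$ whenever $0<\|u\|_{\varepsilon}\leq\rho_{0}$. Since Lemma~\ref{negat} guarantees $t_{0}\psi_{\varepsilon,\delta}\neq 0$ (because $I_{\varepsilon}(t_{0}\psi_{\varepsilon,\delta})<0$), it suffices to define
$$
\rho_{\varepsilon,\delta}:=\min\bigl\{\rho_{0}(\varepsilon),\,\tfrac{1}{2}\|t_{0}\psi_{\varepsilon,\delta}\|_{\varepsilon}\bigr\},\qquad d_{\varepsilon,\delta}:=\tfrac{1}{8}\rho_{\varepsilon,\delta}^{2},
$$
and all three assertions of the lemma follow at once.

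The argument is standard and I do not anticipate any genuine obstacle: the only point that requires attention is to keep track of the fact that all constants are allowed to depend on $\varepsilon$ (and, through $\psi_{\varepsilon,\delta}$, on $\delta$), because no uniformity in those parameters is required at this stage; the delicate uniform control of the mountain pass level in $\varepsilon$ is deferred to the subsequent lemmas where the asymptotic $\varepsilon\to 0$ is analysed.
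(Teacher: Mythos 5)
Your proof is correct and follows essentially the same route as the paper: bound $F$ via $(f_1)$--$(f_2)$, absorb the $L^2$ term into the quadratic part by choosing the small parameter appropriately, control the remaining terms via the embeddings of Propositions~\ref{th2.1}--\ref{th2.2}, and conclude from superquadraticity. The only cosmetic difference is that the paper subsumes the intermediate power into $|t|^{2^*_\alpha}$ (i.e.\ uses $|F(x,t)|\leq\tau t^2+C(\tau)|t|^{2^*_\alpha}$) while you retain a separate $|t|^p$ term, and you make the final choices of $\rho_{\varepsilon,\delta}$ and $d_{\varepsilon,\delta}$ explicit where the paper leaves them implicit.
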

	\begin{proof}
		By $(f_1)$ and $(f_2)$, for any $\tau>0$, there exists $C(\tau)>0$ such that
		\begin{equation*}
			|F(x,t)|\leq\tau t^2+C(\tau)|t|^{2^{*}_{\alpha}}.
		\end{equation*}
		For any $u\in X_\eps$, from Proposition \ref{th2.2}, we derive
			\begin{align*}
				I_{\varepsilon}(u)\geq&\frac{1}{2}\|u\|_{\varepsilon}^2-\tau\varepsilon^{-2\alpha}\|u\|^2_{L^2}
				-C(\tau)\varepsilon^{-2\alpha}\|u\|^{2^{*}_{\alpha}}_{L^{2^{*}_{\alpha}}}
				-\varepsilon^{-2\alpha}\frac{K_1}{2^{*}_{\alpha}}\|u\|^{2^{*}_{\alpha}}_{L^{2^{*}_{\alpha}}}\\
				\geq&\frac{1}{2}\|u\|_{\varepsilon}^2-\tau\varepsilon^{-2\alpha} c^2(\varepsilon)\|u\|_{\varepsilon}^2
				-C(\varepsilon)\|u\|^{2^{*}_{\alpha}}_{\varepsilon},
		\end{align*}
		where $c(\varepsilon)>0$ is the embedding constant of $(X_\eps,\|\cdot\|_\varepsilon)\hookrightarrow L^2(\mathbb{R}^{N},\mathbb{C})$.
		Letting $\tau<\frac{\varepsilon^{2\alpha}}{4c^2(\varepsilon)}$,
		we get
		$$
		I_{\varepsilon}(u)\geq\frac{1}{4}\|u\|_{\varepsilon}^2
		-C(\varepsilon)\|u\|^{2^{*}_{\alpha}}_{\varepsilon}.
		$$
		Then, there exist $d_{\varepsilon,\delta}>0$ and $0<\rho_{\varepsilon,\delta}<\|t_0\psi_{\varepsilon,\delta}\|_{\varepsilon}$ such that $I_{\varepsilon}(u)\geq d_{\varepsilon,\delta}$ for $u\in X_\eps$ with $\|u\|_{\varepsilon}=\rho_{\varepsilon,\delta}$ and $I_{\varepsilon}(u)>0$ for any $u\in X_\eps\setminus\{0\}$ with $\|u\|_{\varepsilon}<\rho_{\varepsilon,\delta}$.
	\end{proof}
	
	\begin{proposition}[Sobolev constant bounds]
		\label{Const-lim}
		There exists ${\mathcal S}_\alpha,{\mathcal S}^\alpha>0$ independent of $\eps$ with
		$$
		{\mathcal S}_\alpha\leq S_\alpha^\eps\leq {\mathcal S}^\alpha,\quad\text{for every $\eps>0$}.
		$$
		In particular, with reference to Lemma~\ref{Th3.1}, the Palais-Smale for $I_\eps$ holds for
	\begin{equation}
	\label{boundind}
		-\infty<c<C_0\varepsilon^{N-2\alpha}, \qquad 
		C_{0}:=\left(\frac{{\mathcal S}_{\alpha}}{c_2+K_1}\right)^{\frac{2^{*}_{\alpha}}{2^{*}_{\alpha}-2}}\frac{(2^{*}_{\alpha}-\mu)K_0}{\mu 2^{*}_{\alpha}}.
	\end{equation}
	\end{proposition}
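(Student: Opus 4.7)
The plan is to show in fact $S_\alpha^\eps = S_\alpha$ for every $\eps>0$, where
$$
S_\alpha := \inf_{w\in D^{\alpha}(\mathbb R^N)\setminus\{0\}}\frac{[w]_{\alpha,0}^2}{\|w\|_{L^{2^*_\alpha}}^2}
$$
is the classical (non-magnetic) fractional Sobolev constant, and then set $\mathcal{S}_\alpha:=\mathcal{S}^\alpha:=S_\alpha$. The lower bound $S_\alpha^\eps\ge S_\alpha$ is a direct consequence of the diamagnetic inequality (see \cite{DS}), namely $[|u|]_{\alpha,0}\le[u]_{\alpha,A_\eps}$ for every $u\in D^{\alpha}_{A_\eps}(\mathbb R^N,\mathbb C)$, combined with $\||u|\|_{L^{2^*_\alpha}}=\|u\|_{L^{2^*_\alpha}}$.

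For the upper bound I would use a gauge-adjusted concentration test. Fix a real-valued $w\in C_c^\infty(\mathbb R^N)$ with $\mathrm{supp}(w)\subset B_R$, and for $n\in\mathbb N$ set
$$
u_n(x) := e^{\im A(0)\cdot x/\eps}\, n^{N/2^*_\alpha}\, w(nx).
$$
The change of variables $\xi=nx$, $\eta=ny$, combined with the identity $A(0)/(n\eps)=A_\eps(0)/n$ which cancels against the magnetic phase, gives $\|u_n\|_{L^{2^*_\alpha}}=\|w\|_{L^{2^*_\alpha}}$ and
$$
[u_n]_{\alpha,A_\eps}^2 = \int_{\mathbb R^{2N}}\frac{|w(\xi)-e^{\im\theta_n(\xi,\eta)}w(\eta)|^2}{|\xi-\eta|^{N+2\alpha}}\,d\xi\,d\eta,\qquad
\theta_n := \frac{\xi-\eta}{n\eps}\cdot\Bigl[A\Bigl(\tfrac{\xi+\eta}{2n}\Bigr)-A(0)\Bigr].
$$
The elementary inequality $|a+b|^2\le(1+\delta)|a|^2+(1+1/\delta)|b|^2$ applied with $a=w(\xi)-w(\eta)$, $b=-w(\eta)(e^{\im\theta_n}-1)$ reduces the task to proving
$$
\lim_{n\to\infty}\int_{\mathbb R^{2N}}|w(\eta)|^2\,\frac{|e^{\im\theta_n}-1|^2}{|\xi-\eta|^{N+2\alpha}}\,d\xi\,d\eta = 0.
$$
I would split the $\xi$-integral at $|\xi|=M$: on $\{|\xi|\le M\}$, continuity of $A$ at $0$ forces $|\theta_n|\le |\xi-\eta|(n\eps)^{-1}\omega_A((M+R)/(2n))$ with $\omega_A(s)\to 0$ as $s\to 0^+$, and the kernel $|\xi-\eta|^{2-N-2\alpha}$ is locally integrable since $\alpha<1$, yielding an $o_n(1)$ bound for fixed $M$; on $\{|\xi|>M\}$ the estimate $|e^{\im\theta_n}-1|^2\le 4$ together with $|\xi-\eta|\ge M/2$ (since $\eta\in\mathrm{supp}(w)$) gives a tail $O(M^{-2\alpha})$ uniform in $n$. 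Sending $n\to\infty$, then $M\to\infty$, then $\delta\to 0$, one gets $\limsup_n [u_n]_{\alpha,A_\eps}^2\le [w]_{\alpha,0}^2$, whence $S_\alpha^\eps\le [w]_{\alpha,0}^2/\|w\|^2_{L^{2^*_\alpha}}$, and the infimum over $w$ produces $S_\alpha^\eps\le S_\alpha$.

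Finally, since $s\mapsto s^{2^*_\alpha/(2^*_\alpha-2)}$ is monotone increasing and $S_\alpha^\eps\ge \mathcal{S}_\alpha$, the constant $C_0(\eps)$ in Lemma~\ref{Th3.1} dominates the $\eps$-free $C_0$ in \eqref{boundind}; any energy level $c<C_0\eps^{N-2\alpha}$ therefore lies below $C_0(\eps)\eps^{N-2\alpha}$, and Lemma~\ref{Th3.1} delivers the $\mathrm{(PS)}_c$ property. The main obstacle is the upper-bound step: $A$ is merely assumed continuous (hence possibly unbounded) and the integration runs over all of $\mathbb R^{2N}$, so pointwise convergence of the integrand does not transfer directly to the integrals; the three-step limit ($n$ on bounded sets, then $M$ for the tail, then $\delta$) is precisely what allows the simultaneous use of the compact support of $w$, the continuity of $A$ at $0$, and the fractional-kernel decay at infinity.
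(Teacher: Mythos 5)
Your proof is correct and, for the upper bound, takes a genuinely different route from the paper. The lower bound via the diamagnetic inequality is identical. For the upper bound, the paper plugs a single scaled test function $x\mapsto\varphi(x/\eps)e^{\im A(0)\cdot x/\eps}$ into \eqref{5}, splits off the cross term $\mathbb I_2$, and bounds it crudely by some $\eps$-independent constant $\mathcal S^\alpha$, making no attempt to recover the sharp constant; the resulting $\mathcal S^\alpha$ is strictly larger than $\mathcal S_\alpha$. You instead fix $\eps$ and use the concentrating family $u_n(x)=e^{\im A(0)\cdot x/\eps}\,n^{N/2^*_\alpha}w(nx)$, $n\to\infty$, so that after rescaling the magnetic phase becomes $\theta_n=\frac{\xi-\eta}{n\eps}\cdot\bigl[A\bigl(\frac{\xi+\eta}{2n}\bigr)-A(0)\bigr]$; your three-step limit (bounded $\xi$-window, tail in $M$, then $\delta\to 0$) shows the cross term vanishes, giving the sharp conclusion $S_\alpha^\eps=S_\alpha$ for every $\eps>0$. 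This is stronger than what the paper proves, and it buys something beyond elegance: your argument uses only continuity of $A$ at the single point $0$, because concentration pushes the sample point $(\xi+\eta)/(2n)$ to the origin, whereas the paper's fixed test function evaluates $A$ at $\eps(x+y)/2$ over a set that is unbounded as $\eps$ ranges over $(0,\infty)$, so its claimed $\eps$-uniform bound on $\mathbb I_2$ implicitly needs $A$ bounded or $\eps$ restricted to a bounded interval (harmless for the application, where only small $\eps$ matters, but not literally covered by the stated "for every $\eps>0$"). Your final observation — that $s\mapsto s^{2^*_\alpha/(2^*_\alpha-2)}$ is increasing, so $C_0(\eps)\ge C_0$ and \eqref{boundind} is subsumed in the hypothesis of Lemma~\ref{Th3.1} — matches what the paper leaves implicit.
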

	\begin{proof}
		By virtue of the pointwise diamagnetic inequality \cite[Remark 3.2]{DS}
		\begin{align*}
		\big||u(x)|-|u(y)|\big|\leq \big|u(x)-e^{{\rm i}(x-y)\cdot A_\eps(\frac{x+y}{2})}u(y)\big|,
		\quad\text{for a.e.\ $x,y\in\mathbb{R}^N$ and all $\eps>0$,}
		\end{align*}
		we have
		\begin{align*}
		S_{\alpha}^\eps=\inf_{u\in D^{\alpha}_{A_{\varepsilon}}(\mathbb{R}^{N})\setminus\{0\}}
		\frac{[u]_{\alpha,_{A_\varepsilon}}^2}{\|u\|_{L^{2_{\alpha}^{*}}}^{2}}\geq \inf_{D^{\alpha}_{A_{\varepsilon}}(\mathbb{R}^{N})\setminus\{0\}}
		\frac{\Big(\displaystyle\int_{\mathbb{R}^{2N}}\frac{||u(x)|-|u(y)||^{2}}{|x-y|^{N+2\alpha}}\,dxdy
			\Big)^{1/2}}{\||u|\|_{L^{2_{\alpha}^{*}}}^{2}}\geq{\mathcal S}_\alpha,
		\end{align*}
		where ${\mathcal S}_\alpha>0$ is the Sobolev constant for the embedding $D^\alpha({\mathbb R}^N)\hookrightarrow L^{2_\alpha^*}(\mathbb{R}^N)$.
		Concerning the opposite inequality, fix $\varphi\in C^\infty_c({\mathbb R}^N)\setminus\{0\}$ with 
		$\|\varphi\|_{L^{2_{\alpha}^*}}=1$ and use the function
		$$
		x\mapsto \varphi\left(\frac{x}{\eps}\right)e^{\im A(0)\cdot\frac{x}{\eps}},
		$$
		in the definition of $S_{\alpha}^\eps$.  We have
			\begin{equation*}
			S_{\alpha}^\eps \leq 
			\int_{\mathbb{R}^{2N}}\frac{|\varphi(x)-e^{\im(x-y)\cdot (A(\frac{\varepsilon x+\varepsilon y}{2})-A(0))}\varphi(y)|^2}{|x-y|^{N+2\alpha}}\,dxdy
			\leq {\mathbb I}_1+{\mathbb I}_2, 
				\end{equation*}
				where
					\begin{equation*}
			{\mathbb I}_1=2\int_{\mathbb{R}^{2N}}\frac{|\varphi(x)-\varphi(y)|^2}{|x-y|^{N+2\alpha}}\,dxdy,\quad
			{\mathbb I}_2=2\int_{\mathbb{R}^{2N}}
			\frac{|\varphi(y)|^2|e^{\im(x-y)\cdot(A(0)-A(\frac{\varepsilon x+\varepsilon y}{2}))}-1|^2}{|x-y|^{N+2\alpha}}\,dxdy.
			\end{equation*}
			It is sufficient to estimate ${\mathbb I}_2$ from above independently of $\eps>0$. If $K$ is the support of $\varphi,$ let
			$$
			M_y:=\big\{x\in {\mathbb R}^N: |x-y|\leq 1\big\},\quad y\in K,
			$$
			Taking into account \eqref{seno}, for some $C>0$ independent of $\eps$, we have
			\begin{equation*}
			\begin{split}
			{\mathbb I}_2=&\Big(\int_{K}|\varphi(y)|^2\,dy
			\int_{M_y}+\int_{K}|\varphi(y)|^2\,dy\int_{\mathbb{R}^{N}\setminus M_y}\Big)\frac{|e^{\im(x-y)(A(0)-A(\frac{\varepsilon x+\varepsilon y}{2}))}-1|^2}{|x-y|^{N+2\alpha}}\,dx\\
			\leq& C\int_{K}|\varphi(y)|^2\,dy\int_{M_y}
			\frac{|x-y|^2}{|x-y|^{N+2\alpha}}\,dx
			+C\int_{K}|\varphi(y)|^2\,dy\int_{\mathbb{R}^{N}\setminus M_y}
			\frac{1}{|x-y|^{N+2\alpha}}\,dx=:{\mathcal S}^\alpha>0,
			\end{split}
			\end{equation*}
			concluding the proof.
	\end{proof}
	
	\section{Proof of Theorem~\ref{Th1} concluded}
	We shall prove that there exists $\varepsilon_0>0$ such that 
	for any $\varepsilon\in(0,\varepsilon_0)$, problem \eqref{p} 
	admits a solution $u_\varepsilon\in X_\eps$ close to the trivial one in $X_\eps$ for the norm $\|\cdot\|_{X_\eps}$.
	For any $t>0$, from Lemma~\ref{norm-est} we have
	\begin{align*}
			I_{\varepsilon}(t\psi_{\eps,\delta}) 
			&\leq c_1^{-\frac{2}{q-2}}\frac{q-2}{2q}\varepsilon^{N-2\alpha}
			\Big(\int_{\mathbb{R}^{2N}}\frac{|\psi_{\delta}(x)-e^{\im(x-y)\cdot A(\frac{\varepsilon x+\varepsilon y}{2})}\psi_{\delta}(y)|^2}{|x-y|^{N+2\alpha}}\,dxdy
			+\int_{\mathbb{R}^{N}}V(\varepsilon x)|\psi_{\delta}|^2\,dx\Big)^{\frac{q}{q-2}}\\
			&\leq c_1^{-\frac{2}{q-2}}\frac{q-2}{2q}\varepsilon^{N-2\alpha}\Big(C\delta^{\frac{2N-(N-2\alpha)q}{q}}
			+\frac{1}{1-\alpha}\delta^{2\alpha}+\frac{4}{\alpha}\delta^{2\alpha}
			+\int_{\mathbb{R}^{N}}V(\varepsilon x)|\psi_{\delta}|^2\,dx\Big)^{\frac{q}{q-2}}.
	\end{align*}
Choose now $\delta>0$, depending {\em only} upon $N,\alpha,f,A,K$, such that
$$
c_1^{-\frac{2}{q-2}}\frac{q-2}{2q}\left(C\delta^{\frac{2N-(N-2\alpha)q}{q}}+\frac{1}{1-\alpha}\delta^{2\alpha}+\frac{4}{\alpha}\delta^{2\alpha}
+\delta\right)^{\frac{q}{q-2}}<C_0,
$$
where $C_0$ is defined in \eqref{boundind}. Since  $V(x)\to 0$ as $|x|\to 0$, there is $x_{0,\delta}>0$ with 
$$
|V(x)|<\frac{\delta}{\|\psi_{\delta}\|^2_{L^2}},\quad\text{for all $|x|<x_{0,\delta}$}.
$$
We take $\varepsilon_{1}=\min\Big\{\varepsilon_0, \frac{x_{0,\delta}}{r_{\delta}}\Big\}$. Then, for any $\varepsilon<\varepsilon_1$, we have
	$$
	\int_{B_{r_\delta}}V(\varepsilon x)|\psi_{\delta}(x)|^2\,dx<\delta.
	$$
	From the above estimate, we obtain
	\begin{equation*}
		\max_{t\geq 0}	I_{\varepsilon}(t\psi_{\eps,\delta})<C_0\varepsilon^{N-2\alpha}
	\end{equation*}
	Denote, for every $\eps>0$,
	$$
	c_{\varepsilon}:=\inf\limits_{\gamma\in\Gamma}\max\limits_{t\in[0,1]}I_{\varepsilon}(\gamma(t)),
	\quad\,\, 
	\Gamma_\eps:=\Big\{\gamma\in C([0,1],X_\eps):\gamma(0)=0,\gamma(1)=t_0\psi_{\varepsilon,\delta}\Big\}.
	$$
Then, we have 	
	$$
	\inf\limits_{\|u\|_{\eps}=\rho_{\varepsilon,\delta}}I_{\varepsilon}(u)>I_{\varepsilon}(0)>I_{\varepsilon}(t_0\psi_{\varepsilon,\delta})
	$$
	and, by using the curve $\gamma(t)(x):=tt_0\psi_{\eps,\delta}(x)$ of $\Gamma_\eps$, we get
	\begin{equation}\label{20}
		0<d_{\varepsilon,\delta}\leq c_{\varepsilon}\leq \max_{t\in [0,1]}I_{\varepsilon}(tt_0 \psi_{\varepsilon,\delta})
		\leq\max_{t\geq 0}	I_{\varepsilon}(t\psi_{\eps,\delta})
		<C_0\varepsilon^{N-2\alpha}.
	\end{equation}
	By the Mountain Pass Theorem, there exists a sequence $\{u_{n}\}_{n\in {\mathbb N}}\subset X_\eps$  such that 
	$$
	I_{\varepsilon}(u_{n})\rightarrow c_{\varepsilon}\quad\mbox{and}\quad I_{\varepsilon}'(u_{n})\rightarrow 0\ \mbox{in}\ X^*_\eps,\ \mbox{as}\ n\rightarrow\infty.
	$$
	By Proposition~\ref{Const-lim}, there is a subsequence $\{u_{n_j}\}_{j\in{\mathbb N}}$ such that $u_{n_j}\rightarrow u_\varepsilon$ in $X_\eps$. Thus $I_\varepsilon(u_\varepsilon)=c_\varepsilon$ and $I_\varepsilon'(u_\varepsilon)=0$, namely $u_\varepsilon$ is a nontrivial weak solution of \eqref{p}.
	Besides, from \eqref{20} we get 
	\begin{eqnarray*}
		\begin{split}
			C_0\varepsilon^{N-2\alpha}>c_\varepsilon=&I_{\varepsilon}(u_\varepsilon)-\frac{1}{\mu}\langle I'_{\varepsilon}(u_\varepsilon),u_\varepsilon\rangle\\
			\geq&\left(\frac{1}{2}
			-\frac{1}{\mu}\right)[u_\varepsilon]^2_{\alpha,A_\varepsilon}
			+\left(\frac{1}{2}
			-\frac{1}{\mu}\right)\varepsilon^{-2\alpha}\int_{\mathbb{R}^{N}}V(x)|u_\varepsilon|^2\,dx,
		\end{split}
	\end{eqnarray*}
	which implies that
	$$
	[u_\varepsilon]^2_{\alpha,A_\varepsilon}<\frac{2C_0\mu}{\mu-2}\varepsilon^{N-2\alpha},\qquad
	\int_{\mathbb{R}^{N}}V(x)|u_\varepsilon|^2\,dx<\frac{2C_0\mu}{\mu-2}\varepsilon^{N}.
	$$
	Then $u_\varepsilon\rightarrow0$ in $X_\eps$ for the norm $\|\cdot\|_{X_\eps}$, as $\varepsilon\rightarrow0$.
	\qedsymbol

	\section{Some results without magnetic field}
	In this Section, we consider the existence of solutions for \eqref{p} without  magnetic field, i.e.\ $A\equiv 0$.
	We first establish the existence of $m$ pairs of solutions of via the Ljusternik-Schnirelmann theory of critical points.
	Let $\Sigma(X_\eps)$ be the family of sets $F\subseteq \Sigma(X_\eps)\setminus\{0\}$ such that $F$ is closed in $X_\eps$ and symmetric with respect to $0$, i.e.\ $x\in F$ implies $-x\in F$. For $F\in\Sigma(X_\eps)$, we define the genus of $F$ to be $k$, denoted by ${\rm gen}(F)=k$, if there is a continuous and odd map $\psi: F\rightarrow \mathbb{R}^{k}\setminus\{0\}$ and $k$ is the smallest integer with this property. The definition of  genus  here, which was by {\em Coffman} \cite{Coffman}, is equivalent with the {\em Krasnoselski} original genus.
	Denote by $\Gamma_{*}$ the set of all odd homeomorphisms $g\in C(X_\eps,X_\eps)$ such that $g(0)=0$ and $g(B_1)\subseteq\{u\in X_\eps:I_{\varepsilon}(u)\geq0\}$. We denote by $\Gamma_{m}$ the set of all compact subsets $F$ of $X_\eps$ which are symmetric with respect to the origin and satisfies ${\rm gen}(F\cap g(\partial B_1))\geq m$ for any $g\in\Gamma_{*}$. We refer to \cite{Chabrowski} for more details.
	
	\begin{theorem}\label{Th2}
		Assume that hypotheses  $(V_1)$-$(V_2)$,
		$(f_1)$-$(f_4)$  and $(K)$ are fulfilled. If the subcritical nonlinearity $f(x,t)$ is odd in $t$, for any $m\in\mathbb{N}$  there exist $\varepsilon_{m}>0$
		such that for any $\varepsilon\in(0,\varepsilon_{m})$, problem \textup{(1.1)} has at least $m$ pairs of nontrivial weak solutions in $X_\eps$.
	\end{theorem}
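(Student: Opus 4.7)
\emph{Proof sketch of Theorem~\ref{Th2}.}
The plan is to apply a symmetric Ljusternik-Schnirelmann critical point theorem to the even $C^1$ functional $I_\varepsilon$, using the genus class $\Gamma_m$ introduced just before the statement. With $A\equiv 0$, the space $X_\varepsilon$ may be regarded as a real Hilbert space; since $f(x,\cdot)$ is odd in $t$, the primitive $F(x,|u|)$ is even in $u$, and together with the evenness of the critical term this makes $I_\varepsilon$ even with $I_\varepsilon(0)=0$. The compactness analysis of Section~\ref{sec3} is insensitive to removing the magnetic field: Lemmas~\ref{vanish}--\ref{Th3.1} and Proposition~\ref{Const-lim} still show that $I_\varepsilon$ satisfies $(\mathrm{PS})_c$ for every $c<C_0\varepsilon^{N-2\alpha}$, while the mountain pass geometry on the sphere $\|u\|_\varepsilon=\rho_{\varepsilon,\delta}$ continues to hold.

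For each $m\in\mathbb{N}$, introduce the symmetric minimax values
\begin{equation*}
c_\varepsilon^m := \inf_{F\in\Gamma_m}\max_{u\in F}I_\varepsilon(u).
\end{equation*}
Standard properties of the genus, combined with the mountain pass geometry (every $F\in\Gamma_m$ meets $\partial B_{\rho_{\varepsilon,\delta}}$), give the chain $0<d_{\varepsilon,\delta}\leq c_\varepsilon^1\leq c_\varepsilon^2\leq\cdots\leq c_\varepsilon^m$. The Ljusternik-Schnirelmann deformation argument (see Chabrowski~\cite{Chabrowski}) then yields that, provided $c_\varepsilon^m<C_0\varepsilon^{N-2\alpha}$, each $c_\varepsilon^k$, $1\leq k\leq m$, is a critical value of $I_\varepsilon$, with the usual convention that when consecutive values coincide the corresponding critical set has positive genus; in either case one obtains at least $m$ pairs of nontrivial critical points.

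The main obstacle is establishing the upper bound $c_\varepsilon^m<C_0\varepsilon^{N-2\alpha}$ for sufficiently small $\varepsilon$ depending on $m$. My strategy is to construct an explicit $m$-dimensional test subspace. Pick $m$ smooth real-valued cut-offs $\phi_{\delta,1},\ldots,\phi_{\delta,m}\in C_0^\infty(\mathbb{R}^N)$ with pairwise disjoint supports, each obtained by translating the function $\phi_\delta$ of Lemma~\ref{negat} to a point $x_j$ near $0$ (no phase factor is needed since $A\equiv 0$). Set $\psi_{\varepsilon,\delta,j}(x):=\phi_{\delta,j}(\varepsilon^{-1}x)$ and
\begin{equation*}
E_m:=\mathrm{span}\{\psi_{\varepsilon,\delta,j}:1\leq j\leq m\}.
\end{equation*}
For $u=\sum_j t_j\psi_{\varepsilon,\delta,j}\in E_m$, the disjoint-support hypothesis separates the Gagliardo seminorm into the sum of the individual seminorms plus nonlocal cross terms which, by choosing the mutual separation of the $x_j$ comparable to the fixed parameter $\delta$, are controlled uniformly. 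Arguing exactly as in the derivation of the energy estimate preceding \eqref{20}, using $(f_4)$ and the $\varepsilon$-scaling, one obtains
\begin{equation*}
\max_{u\in E_m}I_\varepsilon(u)\leq \varepsilon^{N-2\alpha}\,m\,\Theta(\delta,\varepsilon),
\end{equation*}
where $\Theta(\delta,\varepsilon)\to c_1^{-\frac{2}{q-2}}\frac{q-2}{2q}\bigl(C\delta^{\frac{2N-(N-2\alpha)q}{q}}+\tfrac{1}{1-\alpha}\delta^{2\alpha}+\tfrac{4}{\alpha}\delta^{2\alpha}\bigr)^{q/(q-2)}$ as $\varepsilon\to 0$, thanks to $V(0)=0$ and the continuity of $V$.

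To conclude, fix $\delta>0$ so small, depending on $m$, that $m\Theta(\delta,0)<C_0$; then choose $\varepsilon_m>0$ so that for every $\varepsilon\in(0,\varepsilon_m)$ we have $m\Theta(\delta,\varepsilon)<C_0$ and moreover the interior of $E_m\cap B_R$ (for a suitable $R$ on whose boundary $I_\varepsilon<0$) provides a compact symmetric set $F_m\in\Gamma_m$ on which $I_\varepsilon<C_0\varepsilon^{N-2\alpha}$. This yields $c_\varepsilon^m<C_0\varepsilon^{N-2\alpha}$ and the Ljusternik-Schnirelmann theorem applied below this threshold produces at least $m$ pairs of nontrivial weak solutions, completing the proof. \qed
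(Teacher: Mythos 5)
Your argument follows essentially the same strategy as the paper: build an $m$-dimensional symmetric test set from $m$ disjointly supported, $\varepsilon$-rescaled cut-offs, show its energy stays below the compactness threshold $C_0\varepsilon^{N-2\alpha}$ by shrinking $\delta$ (depending on $m$), invoke Proposition~\ref{Const-lim} for $(\mathrm{PS})_c$ below that level, and apply genus-based Ljusternik-Schnirelmann theory to obtain $m$ pairs of critical points. The structure and the role of each ingredient are identical.

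Two small inaccuracies in your sketch, neither of which breaks the argument. First, your limiting expression for $\Theta(\delta,\varepsilon)$ still carries the terms $\tfrac{1}{1-\alpha}\delta^{2\alpha}+\tfrac{4}{\alpha}\delta^{2\alpha}$ coming from the magnetic phase estimate of Lemma~\ref{norm-est}; with $A\equiv 0$ those terms simply do not appear, and the relevant bound is just $C\delta^{(2N-(N-2\alpha)q)/q}+\delta$ (the extra $\delta$ coming from $\int V(\varepsilon x)|\phi_\delta^j|^2$). Since your terms still tend to $0$ as $\delta\to 0$, this is harmless but superfluous. Second, your stated prefactor $m$ in $\max_{E_m}I_\varepsilon\leq \varepsilon^{N-2\alpha}m\Theta(\delta,\varepsilon)$ is too optimistic: optimizing the quadratic-minus-$q$-power expression mode by mode, after a Cauchy-Schwarz bound on the mixed Gagliardo terms, gives a power like $m^{(3q-2)/(q-2)}$ (as in the paper) or at best $m^{(2q-2)/(q-2)}$ — in any case superlinear in $m$. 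Again this is inconsequential, since you choose $\delta=\delta(m)$ small enough afterwards, but the stated exponent should be corrected. Also note that your remark about controlling the cross terms by making the separation "comparable to $\delta$" is not really needed: the simpler Cauchy-Schwarz bound $[u]^2\leq m\sum_j t_j^2[e_j]^2$ already suffices and avoids having to quantify the separation of the supports.
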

	
	\begin{proof} 
		As in Lemma~\ref{negat},
		for any $m\in\mathbb{N}$,  we can take  $\phi_{\delta}^{j}\in C_0^{\infty}(\mathbb{R}^{N})$ such that, for any $j=1\ldots,m$,
		$$
		{\rm supp}\,\phi_{\delta}^j\subset B_{r_{m,\delta}}(x_{j,\delta}),\quad 
		\|\phi_{\delta}^{j}\|_{L^q}=1, \quad
		[\phi_{\delta}^{j}]_{\alpha,0}<C\delta^{\frac{2N-(N-2\alpha)q}{q}},
		$$
		with $B_{r_{m,\delta}}(x_{i,\delta})\cap B_{r_{m,\delta}}(x_{j,\delta})=\emptyset,$ for any $i\neq j$.
		Set $e_{\varepsilon,\delta}^{j}(x)=\phi_{\delta}^{j}(\varepsilon^{-1}x)$. Thus
		$$ 
		\int_{\mathbb{R}^{2N}}\frac{|e_{\varepsilon,\delta}^{j}(x)-e_{\varepsilon,\delta}^{j}(y)|^2}{|x-y|^{N+2\alpha}}\,dxdy<
		C\delta^{\frac{2N-(N-2\alpha)q}{q}}\varepsilon^{N-2\alpha},\quad
		\int_{\mathbb{R}^{N}}|e_{\varepsilon,\delta}^{j}|^q\,dx=\varepsilon^N.
		$$
		Define $m$-dimensional subspace $F^{\varepsilon,\delta}_{m}:={\rm span}\{e_{\varepsilon,\delta}^{j}\}_{j=1,\ldots,m}$.
		For any  $\delta>0$ with 
		$$
		m^\frac{3q-2}{q-2}c_1^{-\frac{2}{q-2}}\frac{q-2}{2q}\left(C\delta^{\frac{2N-(N-2\alpha)q}{q}}+\delta      \right)^{\frac{q}{q-2}}<C_0.
		$$ 
Let now for any $j=1,\ldots,m$ radii $R_{j,\delta}>0$ with $B_{r_{m,\delta}}(x_{j,\delta})\subset B_{R_{j,\delta}}(0)$.
Therefore, since  $V(x)\to 0$ as $|x|\to 0$, there is $x_{j,\delta}>0$ with 
$$
|V(x)|<\frac{\delta}{\|\phi^j_{\delta}\|^2_{L^2}},\quad\text{for all $|x|<x_{j,\delta}$}.
$$
Then, for any $\varepsilon<\frac{x_{j,\delta}}{R_{j,\delta}}$, we have
$$
\int_{B_{r_{m,\delta}(x_j)}}V(\varepsilon x)|\phi^j_{\delta}(x)|^2\,dx\leq \int_{B_{R_{j,\delta}(0)}}V(\varepsilon x)|\phi^j_{\delta}(x)|^2\,dx<\delta.
$$
Then, for any $\varepsilon<\min_{j=1,\ldots,m}\{\frac{x_{j,\delta}}{R_{j,\delta}}\}$ and
		$u\in F^{\varepsilon,\delta}_{m}$ with $u=\sum_{j=1}^{m}t_{j}e_{\varepsilon,\delta}^{j}$, by $(f_4)$ we get
		\begin{align*}
				& I_{\varepsilon}(u)\leq\frac{1}{2}\int_{\mathbb{R}^{2N}}\frac{|u(x)-u(y)|^2}{|x-y|^{N+2\alpha}}\,dxdy
				+\frac{\varepsilon^{-2\alpha}}{2}\int_{\mathbb{R}^{N}}V(x)u^2\,dx
				-\frac{c_1\varepsilon^{-2\alpha}}{q}\int_{\mathbb{R}^{N}}|u|^q\,dx\\
				\leq&\sum_{j=1}^{m}\Big(m^2\frac{t_j^2}{2}\int_{\mathbb{R}^{2N}}\frac{|e_{\varepsilon,\delta}^j(x)-e_{\varepsilon,\delta}^j(y)|^2}{|x-y|^{N+2\alpha}}\,dxdy
				+\varepsilon^{-2\alpha}m^2\frac{t_j^2}{2}\int_{\mathbb{R}^{N}}V(x)|e_{\varepsilon,\delta}^j|^2\,dx-\varepsilon^{-2\alpha}\frac{c_1}{q}t_j^q\int_{\mathbb{R}^{N}}|e_{\varepsilon,\delta}^j|^{q}\,dx\Big)\\
				\leq& \eps^{N-2\alpha}\sum_{j=1}^{m}\Big(\Big[m^2\int_{\mathbb{R}^{2N}}\frac{|\phi_{\delta}^j(x)-\phi^j_{\delta}(y)|^2}{|x-y|^{N+2\alpha}}\,dxdy
				+m^2\int_{\mathbb{R}^{N}}V(\eps x)|\phi_{\delta}^j|^2\,dx\Big]\frac{t_j^2}{2}-c_1\frac{t_j^q}{q}\Big)\\				
				\leq&m^\frac{3q-2}{q-2}c_1^{-\frac{2}{q-2}}\frac{q-2}{2q}\left(C\delta^{\frac{2N-(N-2\alpha)q}{q}}+\delta      \right)^{\frac{q}{q-2}}\varepsilon^{N-2\alpha}<C_0\varepsilon^{N-2\alpha}.
		\end{align*}
		Since ${\rm dim}(F^{\varepsilon,\delta}_{m})<\infty$, $\|\cdot\|_{L^{q}(\mathbb{R}^{N})}$ and $\|\cdot\|_{\varepsilon}$ are equivalent. Then $I_{\varepsilon}(u)\rightarrow-\infty$, as $u\in F^{\varepsilon,\delta}_{m}$ with $\|u\|_{\varepsilon}\rightarrow\infty$. 
		For any $1\leq j\leq m$, let 
		$$
		c_{\varepsilon}^j=\inf_{F\in \Gamma_{m}}\max_{u\in F}I_{\varepsilon}(u),
		$$
		we have 
		$$
		d_\varepsilon\leq c_{\varepsilon}^1\leq c_{\varepsilon}^2\leq\cdots\leq c_{\varepsilon}^m\leq\sup_{u\in F^{\varepsilon,\delta}_{m}}I_{\varepsilon}(u)\leq C_0\varepsilon^{N-2\alpha}.
		$$ 
		From Proposition \ref{Const-lim}, $I_{\varepsilon}$ satisfies ${\rm (PS)}_{c_{\varepsilon}^j}$ condition. Thus, $c_{\varepsilon}^j$ is a critical value of $I_{\varepsilon}$ and $u_{\varepsilon,j}$ is a critical point of $I_{\varepsilon}$ with $I_{\varepsilon}(u_{\varepsilon,j})=c_{\varepsilon}^j$. As $f(x,t)$ is odd in $t$, we derive that $-u_{\varepsilon,j}$ is also a critical point of $I_{\varepsilon}$.
		Then $I_{\varepsilon}$ has at least $m$ pairs  of nontrivial solutions.
	\end{proof}

	\noindent
	Finally, we verify that  problem \eqref{p}  has one pair of sign-changing solutions.
	Let $g:\mathbb{R}^N\rightarrow\mathbb{R}^N$ be an orthogonal involution. Then the action of $g$ on $X$ is defined by 
	$$
	gu(x)=-u(gx),\quad\text{for any $u\in X_\eps$}. 
$$	
	If $V(gx)=V(x)$, $h(gx)=h(x)$ and $f(gx,t)=f(x,t)$, it is easy to verify that $I_{\varepsilon}$ is $g$-invariant, i.e. $I_{\varepsilon}(gu)=I_{\varepsilon}(u)$ and $I_{\varepsilon}'(gu)=gI_{\varepsilon}'(u)$. The subspace of $g$-invariant functions is defined by 
	$$
	X_g=\{u\in X_\eps:gu=u\}.
	$$
	Then the critical points of $\widetilde{I}_{\varepsilon}=I_{\varepsilon}|_{X_g}$ are critical points of $I_{\varepsilon}$.
	Therefore, it suffices to prove the existence of critical points for $\widetilde{I}_{\varepsilon}$ on $X_g$.
	As a consequence, we obtain the following result:
	
	\begin{theorem}\label{Th3}
		Assume that $(V_1)$-$(V_2)$,
		$(f_1)$-$(f_4)$ and $(K)$ are satisfied. If  the nonlinearity $f(x,t)$ is odd in $t$ and there is an orthogonal involution $g$ such that $V(gx)=V(x)$, $h(gx)=h(x)$ and $f(gx,t)=f(x,t)$, then there exist $\varepsilon^{*}>0$
		such that for any $\varepsilon\in(0,\varepsilon^{*})$, problem \textup{(1.1)} has at least one pair of sign-changing  weak solutions in $X$.
	\end{theorem}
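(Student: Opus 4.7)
\noindent
\textbf{Proof proposal for Theorem~\ref{Th3}.} The plan is to work on the closed subspace $X_g\subset X_\eps$ of $g$-invariant functions and apply the principle of symmetric criticality of Palais. Concretely, since $A\equiv 0$ and $V,K,f$ are $g$-invariant, one checks directly that $I_\eps(gu)=I_\eps(u)$ and $I'_\eps(gu)=gI'_\eps(u)$, so any critical point of $\widetilde I_\eps:=I_\eps\big|_{X_g}$ is a critical point of $I_\eps$ on $X_\eps$. Moreover, any nontrivial $u\in X_g$ satisfies $u(gx)=-u(x)$ pointwise, and since $g$ is orthogonal it has a nontrivial fixed-point set where $u$ must vanish while also taking both signs on each orbit; hence every nonzero critical point produced in $X_g$ is automatically sign-changing, and $\{u_\eps,-u_\eps\}$ yields the required pair.

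The local analysis goes through on the subspace without change. Mountain Pass Geometry~II restricts to $X_g$ verbatim (the estimates only use the norm), and since $X_g$ is closed in $X_\eps$, the shifted Palais--Smale argument and Lemma~\ref{Th3.1} imply that $\widetilde I_\eps$ satisfies $(\mathrm{PS})_c$ for $-\infty<c<C_0\eps^{N-2\alpha}$, with $C_0$ as in \eqref{boundind}. The substantive step is to exhibit, for each sufficiently small $\eps$, a path in $X_g$ joining $0$ to a point of negative energy whose maximal value lies strictly below $C_0\eps^{N-2\alpha}$. To that end, fix $x_{0}\in\mathbb{R}^N$ with $gx_{0}\neq x_{0}$ and choose, as in Lemma~\ref{negat}, $\phi_\delta\in C_0^\infty(\mathbb{R}^N)$ supported in a ball $B_{r_\delta}(x_{0,\delta})$ disjoint from $gB_{r_\delta}(x_{0,\delta})$, with
$$
\|\phi_\delta\|_{L^q}=1,\qquad [\phi_\delta]_{\alpha,0}^{2}\leq C\delta^{\frac{2N-(N-2\alpha)q}{q}}.
$$
Define $u_\delta:=\phi_\delta-\phi_\delta\circ g$; a direct computation gives $gu_\delta=u_\delta$, so $u_{\eps,\delta}(x):=u_\delta(\eps^{-1}x)\in X_g$. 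Disjointness of the two supports yields $\|u_\delta\|_{L^q}^{q}=2$ and $[u_\delta]_{\alpha,0}^{2}\leq 4[\phi_\delta]_{\alpha,0}^{2}$, the factor $4$ coming from $(a-b)^{2}\leq 2a^{2}+2b^{2}$ and the orthogonality of $g$.

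Proceeding exactly as in the concluding argument of Theorem~\ref{Th1} (or in the $m$-dimensional estimate of Theorem~\ref{Th2} with $m=2$), after rescaling we obtain
$$
\max_{t\geq 0}I_\eps(tu_{\eps,\delta})
\leq \frac{q-2}{2q}\frac{\bigl([u_\delta]_{\alpha,0}^{2}+\int_{\mathbb{R}^N}V(\eps x)|u_\delta|^{2}\,dx\bigr)^{q/(q-2)}}{(2c_1)^{2/(q-2)}}\eps^{N-2\alpha}.
$$
The support of $u_\delta$ is compact and independent of $\eps$, so $V(\eps\cdot)\to V(0)=0$ uniformly on it; therefore, choosing first $\delta>0$ small enough that
$$
\frac{q-2}{2q}(2c_1)^{-\frac{2}{q-2}}\bigl(4C\delta^{\frac{2N-(N-2\alpha)q}{q}}+\delta\bigr)^{\frac{q}{q-2}}<C_0,
$$
and then $\eps^{*}>0$ small enough that $\int V(\eps x)|u_\delta|^{2}dx<\delta/2$ for $\eps<\eps^{*}$, the maximum above stays strictly below $C_0\eps^{N-2\alpha}$. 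Taking $t_{0}>0$ so that $I_\eps(t_0 u_{\eps,\delta})<0$ and using the straight-line path $t\mapsto tt_{0}u_{\eps,\delta}$ in $\Gamma_\eps\cap C([0,1],X_g)$, the Mountain Pass Theorem applied to $\widetilde I_\eps$ produces a critical point $u_\eps\in X_g$ at level $c_\eps\in(0,C_0\eps^{N-2\alpha})$, which is nontrivial and hence sign-changing. The main obstacle is the bookkeeping in the path-level estimate: the doubling of the Gagliardo seminorm from the antisymmetrization must be absorbed by choosing $\delta$ sufficiently small so that the constant $C_{0}$ from Proposition~\ref{Const-lim} is not exceeded; once this is done, the rest is routine.
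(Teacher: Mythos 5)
Your proof matches the paper's strategy exactly: restrict $I_\eps$ to the $g$-invariant subspace $X_g$, invoke symmetric criticality, observe that the Palais--Smale condition and the mountain-pass geometry carry over to $X_g$, and then lower the mountain-pass level below $C_0\eps^{N-2\alpha}$ via rescaled $g$-invariant test functions, noting that any nonzero element of $X_g$ necessarily changes sign. The paper's test function is the symmetrization $\widetilde\phi=(\phi_\delta+g\phi_\delta)/2$ where you take the equivalent $\phi_\delta-\phi_\delta\circ g$ with disjoint supports and track the doubled constants explicitly; this is only a cosmetic difference in normalization and bookkeeping, and both lead to the same $\delta$-smallness condition.
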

	
	\begin{proof}  Note that for any $\phi\in C_0^\infty(\mathbb{R}^{N})$, 
		$\widetilde{\phi}=\displaystyle\frac{\phi+g\phi}{2}\in C_0^\infty(\mathbb{R}^{N})\cap X_g.$
		One could verify that
		\begin{equation*}
			\inf \left\{\int_{\mathbb{R}^{2N}}\frac{|\phi(x)-\phi(y)|^2}{|x-y|^{N+2\alpha}}\,dxdy:\phi\in C_0^\infty(\mathbb{R}^{N})\cap X_g\ \textup{with} \ \|\phi\|_{L^q(\mathbb{R}^{N})}=1 \right\}=0.
		\end{equation*}
		Then, it is readily seen that $\widetilde{I}_{\varepsilon}$ has a Mountain Pass geometry: for any $\varepsilon>0$ and $\delta>0$:
		
		(1)  there exists $\widetilde{t}_0>0$ and  $\widetilde{e}_{\varepsilon,\delta}\in X_g$ such that $\widetilde{I}_{\varepsilon}(\widetilde{t}_0\widetilde{e}_{\varepsilon,\delta})<0$.
		
		(2)  there exists $\widetilde{d}_\varepsilon>0$ and $0<\widetilde{\rho}_{\varepsilon}<\|t_0\widetilde{e}_{\varepsilon,\delta}\|_{\varepsilon}$ such that $\widetilde{I}_{\varepsilon}(u)\geq \widetilde{d}_\varepsilon$ for any $u\in X_g$ with $\|u\|_{\varepsilon}=\widetilde{\rho}_{\varepsilon}$ and $\widetilde{I}_{\varepsilon}(u)>0$ for any $u\in X_g$ with $\|u\|_{\varepsilon}<\widetilde{\rho}_{\varepsilon}$.
		Denote $$\widetilde{c}_{\varepsilon}=\inf\limits_{\gamma\in\Gamma}\max\limits_{t\in[0,1]}\widetilde{I}_{\varepsilon}(\gamma(t)),$$ where $\Gamma=\{\gamma\in C([0,1],X_g):\gamma(0)=0,\gamma(1)=\widetilde{t}_0\widetilde{e}_{\varepsilon,\delta}\}$.  Then, there is 
		$\varepsilon^*>0$ with, for $0<\varepsilon<\varepsilon^*$,
		\begin{align*}
		\inf\limits_{\|u\|_{\varepsilon}=\widetilde{\rho}_{\varepsilon}}\widetilde{I}_{\varepsilon}(u)&>\widetilde{I}_{\varepsilon}(0)>\widetilde{I}_{\varepsilon}(\widetilde{t}_0\widetilde{e}_{\varepsilon,\delta}), \\
			0<\widetilde{d}_\varepsilon\leq \widetilde{c}_{\varepsilon}\leq \widetilde{I}_{\varepsilon}(t\widetilde{t}_0 \widetilde{e}_{\varepsilon,\delta})&\leq 
		c_1^{-\frac{2}{q-2}}\frac{q-2}{2q}\big(C\delta^{\frac{2N-(N-2\alpha)q}{q}}+\delta  \big)^{\frac{q}{q-2}}\eps^{N-2\alpha}
			<C_0\varepsilon^{N-2\alpha}.
		\end{align*}
		where $C_0$ is as in Proposition \ref{Const-lim}. Then there exists $\widetilde{u}_{\varepsilon}\in X_g$ such that $\widetilde{I}_{\varepsilon}'(\widetilde{u}_{\varepsilon})=0$. Then, $\widetilde{u}_{\varepsilon}$ is a critical point of $I_{\varepsilon}$ and $\widetilde{u}_{\varepsilon}(x)=g\widetilde{u}_{\varepsilon}(x)=-\widetilde{u}_{\varepsilon}(gx)$. It is easy to show that $\widetilde{u}_{\varepsilon}(gx)$  is also a critical point of $I_{\varepsilon}$ and $\widetilde{u}_{\varepsilon}(x)$, $\widetilde{u}_{\varepsilon}(gx)$ change sign.  \qedsymbol
	\end{proof}

	\bigskip

\end{document}